\documentclass[reqno]{amsart}
\usepackage{graphicx}
\usepackage{latexsym}
\usepackage{amssymb,amsthm,amsmath}
\usepackage[active]{srcltx}
 \makeatletter
\renewcommand*\subjclass[2][2000]{%
  \def\@subjclass{#2}%
  \@ifundefined{subjclassname@#1}{%
    \ClassWarning{\@classname}{Unknown edition (#1) of Mathematics
      Subject Classification; using '2010'.}%
  }{%
    \@xp\let\@xp\subjclassname\csname subjclassname@#1\endcsname
  }%
}
 \makeatother
\usepackage{enumerate,url,amssymb,  mathrsfs}
\usepackage{todonotes}

\usepackage{color}

\newtheorem{theorem}{Theorem}[section]
\newtheorem{lemma}[theorem]{Lemma}
\newtheorem*{lemma*}{Lemma}

\theoremstyle{definition}
\newtheorem{definition}[theorem]{Definition}

\theoremstyle{remark}
\newtheorem{remark}[theorem]{Remark}

\numberwithin{equation}{section}

\def\mxi{\boldsymbol{\xi}}
\def\cal{\mathcal}

\newcommand{\pa}{\partial}

\newcommand{\R}{\mathbb{R}}

\newcommand{\ddiv}{\mathrm{div}}

\def\XXint#1#2#3{{\setbox0=\hbox{$#1{#2#3}{\int}$}
\vcenter{\hbox{$#2#3$}}\kern-.5\wd0}}

\def\le{\leqslant}
\def\ge{\geqslant}
\def\mx{{\bf x}}

\def\mff{{\mathfrak f}}
\def\eps{\varepsilon}

\setcounter{tocdepth}{1}


\newcommand{\N}{\mathbb N}

\usepackage[normalem]{ulem}

\begin{document}
\title[A vanishing dynamic capillarity limit]{A vanishing dynamic capillarity limit equation with discontinuous flux}
\author{M.\ Graf}\address{Melanie Graf, Department of Mathematics
University of Washington
C-138 Padelford Hall
Seattle, WA 98195-4350}\email{mgraf2@uw.edu}
\author{M.\ Kunzinger}\address{Michael Kunzinger, University of Vienna, 
Faculty of Mathematics, Oskar Morgenstern--Platz 1, 1090 Wien, Austria}\email{michael.kunzinger@univie.ac.at}
\author{D.\ Mitrovic$^*$}\thanks{$*$ The corresponding author.}\address{Darko Mitrovic,
Faculty of Mathematics, University of
Vienna, Oskar Morgenstern--Platz 1, 1090 Wien, Austria}\email{darko.mitrovic@univie.ac.at}
\author{Dj.\ Vujadinovic}\address{Djordjije Vujadinovic,
Faculty of Mathematics, University of
Montenegro, Cetinjski put bb, 81000 Podgorica, Montenegro}\email{djordjijevuj@ucg.ac.me}

\subjclass[2010]{35K65, 42B37, 76S99}

\keywords{ pseudo-parabolic equations; vanishing dynamic capillarity; discontinuous flux; conservation laws}

\begin{abstract} 
We prove existence and uniqueness of a solution to the Cauchy problem corresponding to the {\bf dynamics capillarity} equation
\begin{equation*}
\begin{cases}
\partial_t u_{\eps,\delta} +\mathrm{div} {\mathfrak f}_{\eps,\delta}(\mx, u_{\eps,\delta})=\eps \Delta u_{\eps,\delta}+\delta(\eps) \pa_t \Delta u_{\eps,\delta}, 
\ \ {\bf x} \in M, \ \ t\geq 0\\
u|_{t=0}=u_0(\mx).
\end{cases}
\end{equation*}
Here, $\mff_{\eps,\delta}$ and $u_0$ are smooth functions while $\eps$ and $\delta=\delta(\eps)$ are fixed constants. Assuming $\mff_{\eps,\delta} \to \mff \in L^p( \R^d\times \R;\R^d)$ for some $1<p<\infty$, strongly as $\eps\to 0$, we prove that, under an appropriate relationship between $\eps$ and $\delta(\eps)$ depending on the regularity of the flux $\mff$,  the sequence of solutions $(u_{\eps,\delta})$ strongly converges in $L^1_{loc}(\R^+\times \R^d)$ towards a solution to the conservation law 
$$
\partial_t u +\mathrm{div} {\mathfrak f}(\mx, u)=0.
$$ 
The main tools employed in the proof are the Leray-Schauder fixed point theorem for the first part and reduction to the kinetic formulation combined with recent results in the velocity averaging theory for the second. These results have the potential to generate a stable semigroup of solutions to the underlying scalar conservation laws different from the Kruzhkov entropy solutions concept. 
\end{abstract}
\maketitle

\section{Introduction and Notation}\label{intro}

Flow in a two-phase porous medium is governed by the Darcy law \cite{bear}

\begin{equation}
\label{D}
q=-K(S) \left( \nabla p +\rho g e_d \right),  
\end{equation} where $e_d=(0,\dots,0,1),$ is the direction of gravity. The quantity $S$ is the saturation, $p$ is the pressure, and (the vector) $q$ is the flow velocity of the wetting phase (usually water, while the non-wetting one is oil or a gas). 

The Darcy law represents conservation of momentum and, in order to close the system, we also need the conservation of mass
\begin{equation}
\label{mc}
\pa_t S+ \ddiv q =0.
\end{equation} In the two-dimensional situation, we have three equations, given by \eqref{D} and \eqref{mc}, while we have four unknowns (two velocity components, saturation and pressure). Therefore, usually one assumes a constitutive relation between the pressure $p$, the capillary pressure $P_c$ (equal to differences of pressures between wetting and non-wetting phases), and the saturation $S$. If it is assumed that the capillary pressure is (almost) constant, one can derive the Buckley-Leverett equation (a scalar conservation law derived in \cite{BL}):
\begin{equation}
\label{BLe}
\pa_t S+\pa_x f(S)=0,
\end{equation}
for $f(S)=\frac{S^2}{S^2+A(1-S)^2}$ and an appropriate constant $A$.

If we assume that $P_c$ is "static" (independent of the $t$-derivative of $S$) in the sense that $p_c=p_c(S)$ then we arrive at a parabolic perturbation of the Buckley-Leverett equation \cite{bear}. However, both of the models (the standard Buckley-Leverett or the one perturbed by a parabolic term) appear to give results inconsistent with certain fairly simple experiments \cite{DC}. Namely, if we take a thin tube filled with dry sand and dip it in water at a constant rate at one side of the tube and then measure the concentration of the water as a function of time, for certain dipping rates the concentration will not be monotonic. This phenomenon,
called overshoot, had been noticed long ago and is a sub-effect of the fingering effect \cite{H}. Namely, the tips of the fingers that appear when water is penetrating into dry land have larger water concentrations than the body of the finger and, interestingly, the tips of the fingers almost do not change their shape or size. On the other hand, an entropy solution to the Buckley-Leverett equation (in the sense of Kruzhkov \cite{Kru} or more precisely Oleinik \cite{O} since we are dealing with a Riemann problem) or the parabolic perturbation of the Buckley-Leverett equation with the Riemann initial data
\begin{equation}
\label{riemann}
S(0,x)=\begin{cases}
S_L, & x<0\\
0, & x>0
\end{cases}
\end{equation} is monotonic (see e.g.\ the introduction of \cite{vDPS}). Therefore, many attempts to explain the gap between the standard theory (provided by the Buckley-Leverett or Richards \cite{R} equations) have been put forth recently. A purely mathematical approach can be found in \cite{KMN}, where 
$\delta$-type-solutions to \eqref{BLe} are constructed and $\delta$-distributions 
appearing as a part of the solution are explained as an inadequacy of the model. 
In \cite{PRE}, a model is suggested that involves the notion of non-local energy, which eventually leads to a fourth-order equation whose solution (with the Riemann initial data given above) has a shape corresponding to the experimental results from \cite{DC}. 

In \cite{vDPS} one can find an interesting approach explaining the discrepancy between theory and experiment described above, based on
the dynamics capillarity concept introduced in \cite{GH1, GH2}, which has drawn a lot of interest (especially after the publication of \cite{vDPS}; see also \cite{AndrCances}). 
Namely, in \cite{GH1, GH2} it was supposed that the capillary pressure depends not only on the saturation $S$, but also on the time derivative of the 
saturation (of the wetting phase):
$$
P_c=p_c(S)-\phi \tau \frac{\pa S}{\pa t}.
$$ Taking this into account and proceeding along the lines of deriving the Buckley-Leverett equation, one reaches a nonlinear pseudo-parabolic equation  (\cite[(1.16)]{vDPS}) which, after linearization of higher order terms reduces to
\begin{equation}
\label{BL3}
\pa_t S+\pa_x f(S)=\eps \pa_{xx}S+\tau \delta \pa_{xx} \pa_t S,
\end{equation}
where $\eps$ and $\delta$ ($\delta=\eps^2$ in \cite{vDPS}) are small parameters, while $\tau$ is a fixed constant. By analyzing possible traveling wave solutions $S(\frac{x-ct}{\eps})$, $S(-\infty)=S_L$, $S(+\infty)=S_R$, where the constant $c$ is given by the Rankine-Hugoniot condition $c=\frac{f(S_L)-f(S_R)}{S_L-S_R}$, the authors arrive at solutions constructed from elementary waves (i.e., solutions consisting of shock waves and rarefaction waves) to \eqref{BLe}, \eqref{riemann} that are non-standard (i.e., non-admissible in the sense of Oleinik). Moreover, for some values $S_L$, such a solution exhibits an overshoot-type phenomenon (see \cite[Figure 7]{vDPS}). 

In this paper, we shall make a step forward in the sense that we shall consider a multi-dimensional generalization to \eqref{BL3} with a flux $\mff=\mff(\mx,\lambda)$ explicitly depending on the space variable and we shall analyze the dynamics capillarity limit to \eqref{BL3} as $\eps\to 0$ for arbitrary initial data. 

The $\mx$-dependence of the flux means that we assume that the medium in which we consider the phenomenon is heterogeneous, i.e.\ that it has different properties at different points (e.g.\ since in some parts of the medium we have sand while in some others clay). Moreover, we shall assume that the flux is discontinuous with respect to the space variable $\mx$, which means that the medium experiences abrupt (discontinuous) changes in its properties (one can imagine that we have sand which is highly permeable adjacent to the clay layer, which is weakly permeable --- permeability is discontinuous in such a medium). Let us remark here that, stipulated by different applications, evolutionary equations with discontinuous flux have attracted considerable attention recently. For a (non-exhaustive) selection of recent
results, cf.\ \cite{AKR, AM, crasta, kar3, Pan} and references therein.

Explicitly, we consider the conservation law

\begin{equation}
\label{cl-1}
\pa_t u+\ddiv \mff(\mx,u)=0,
\end{equation} where

\begin{itemize}

\item[(C1)] $\mff = (f_i)_{i=1}^d = (\mx,\lambda)\mapsto \mff(\mx,\lambda)$, 
 $f_i \in L^1(\R^d\times \R) \cap L^p(\R^d\times \R)$ and $\pa_{\lambda} f_i \in L^p(\R^d\times \R)$ for some $1<p<\infty$ 
 and all $i=1,\dots, d$;

\item[(C2)] $\ddiv_{\mx} \mff(\mx,\xi) \in {\cal M}(\R^d\times \R)$, where ${\cal M}(\R^d\times \R)$ is the space of Radon measures, and there exists a constant $\beta>0$ and a finite Radon measure $\mu \in {\cal M}(\R^d)$ such that (in the sense of measures) 
\begin{equation*} 
|\ddiv_{\mx} \mff(\mx,\xi)|\leq \frac{\mu(\mx)}{1+|\lambda|^{1+\beta}};
\end{equation*}

\item[(C3)] $\|\sup\limits_{\lambda \in \R} |\pa_{\lambda}f_i(\cdot,\lambda)| \|_{L^1(\R^d)} <\infty$ for all $i=1,\dots, d$;

\end{itemize} 
We then perturb \eqref{cl-1} by (a linearized) vanishing dynamic capillarity limit: 

\begin{equation}
\label{vc-1}
\pa_t u_{\eps}+\ddiv \mff_\eps (\mx,u_\eps)=\eps  \Delta u_\eps +\delta (\eps)\Delta \pa_t u_\eps.
\end{equation}  
Here, $\mff_\eps := K_\eps \cdot  \mff \star \omega_{n(\eps )}$, where componentwise convolution $\mff \star \omega_{n(\eps )}$ is a regularization of $\mff$ and $K_\eps=K_\eps(\mx,\lambda)$ is a bounded family of compactly supported functions equal to one on the ball $B(0,1/\eps)\subset \R^{d+1}$ and such that $\|\nabla K_\eps\|_{L^\infty(\R^{d+1})} \leq 1$. {We remark here that \eqref{vc-1} reminds on the diffusion-dispersion regularization (see e.g. \cite{AMP, Lfl-b}), but it contains $t$-derivative in the third order term which makes significant difference between the two situations. We will comment on this in more details later.} 
 
More precisely, we suppose that $\omega\in C^\infty_c(\R^{d+1})$ is of the form 
$\omega(\mx,\lambda)=\omega^{(1)}(\mx)$ $\omega^{(2)}(\lambda)$ with $\omega^{(1)}, \omega^{(2)}$ test functions with 
unit integral on $\R^d$ and $\R$, respectively, and
\begin{equation}\label{mollifying}
\omega_{n(\eps)}(\mx,\lambda) \equiv \omega^{(1)}_{n(\eps)}(\mx)\omega^{(2)}_{n(\eps)}(\lambda) =
n(\eps)^{-d} \omega^{(1)}(\mx/n(\eps))n(\eps)^{-1} \omega^{(2)}(\lambda/n(\eps))
\end{equation} with an appropriate $n(\eps)$ such that  $n(\eps )\to 0$ 
as $\eps\to 0$ (the form of $n(\eps)$ will be precisely determined later). 
Note that
\[
 K_\eps \mff_\eps  \to \mff  \text{ strongly in } L^p(\R^d\times \R) \text{ as } \eps\to 0.
\]
We supplement \eqref{cl-1} with the initial data
\begin{equation}
\label{id}
u|_{t=0}=u_0(\mx)\in L^1(\R^d) \cap L^\infty(\R^d),
\end{equation} while we take 
\begin{equation}
\label{id-pert}
u_\eps\big|_{t=0}=u_{0}^\eps(\mx) \to u_0(\mx) \ \ \text{strongly in} \ \ L^p(\R^d),
\end{equation} where, for every $i=1,\dots,d$,
\begin{equation}
\label{cond}
\|u_{0}^{\eps}\|_{L^{2}(\R^{d})}+n(\eps) \|\nabla u_{0}^{\eps}\|_{L^{2}(\R^{d})}+n(\eps)^{2}\|\nabla u^\eps_{0x_{i}}\|_{L^{2}(\R^{d})}\leq C_{0}.
\end{equation} 
It is not difficult to see that $u_{0}^\eps=u_0\star \omega^{(1)}_{n(\eps)}$, where $\omega^{(1)}_{n(\eps)}$ is as above, satisfies \eqref{cond}. We shall, however, use \eqref{cond} as an assumption.

With regards to pseudo-parabolic equations, we have already explained their importance in the porous media theory. Besides a possible justification of the experimental results from \cite{DC}, there is a confirmed application to the seepage
of homogeneous fluids through a fissured rock \cite{2Y}. Also, they describe the unidirectional propagation of nonlinear, dispersive long waves \cite{3Y, 29Y} 
(where $u$ is typically the amplitude or velocity), or population dynamics \cite{22Y} (where $u$ represents the population density). 

In \cite{26, 30Y}, the authors investigated the initial-boundary value problem and the Cauchy problem for linear pseudo-parabolic equations and established the existence and uniqueness of solutions. As for the non-linear variants, one can find numerous results even for singular pseudo-parabolic equations and degenerate pseudo-parabolic equations (see e.g. \cite{CYW, KNS, Mi, 12Y, 17Y,  Pt} and the references therein). Together with the existence and uniqueness results, among the given references, one can also find properties of solutions, such as asymptotic behavior and regularity. However, we are not aware of any corresponding results for the Cauchy problem for an equation of type \eqref{BL3}. 

This problem will be considered in Section \ref{sec-2}. We shall use an approach that is characteristic for the theory of wave equations \cite{Ach}: we shall apply the Fourier transform with respect to $\mx$, solve the ordinary differential equation so obtained with respect to $t$ and prove the following theorem (the constants $\eps$ and $\delta$ are omitted for simplicity):
\begin{theorem} \label{unique_sol_par}
There exists a unique solution to 
\begin{equation}
\label{main-eq}
u_{t}+\ddiv(\mff(\mx, u)) =\triangle u+\partial_{t}\triangle u, \ \ (t,x)\in [0,T)\times \R^d,
\end{equation} where $T>0$, $\mff \in C^\infty_c(\R^d\times  \R)$ is a bounded function, supplemented with (merely) the initial condition
\begin{equation}
\label{id-nonpert}
u|_{t=0}=u_0(\mx) \in C_c^\infty(\R^d),
\end{equation} which belongs to $L^2((0,T)\times \R^d)\cap C^\infty((0,T)\times \R^d)$.
\end{theorem}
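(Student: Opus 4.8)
The plan is to follow the Fourier-analytic route indicated in the introduction: transform \eqref{main-eq} in $\mx$, read the result as a nonlinear integral equation in $t$, solve it by a fixed-point argument, and finally bootstrap regularity using the smoothing built into the pseudo-parabolic symbol. Writing $\hat u(t,\xi)=\mathcal F_{\mx}u(t,\cdot)(\xi)$ and using $\mathcal F[\Delta u]=-|\xi|^2\hat u$ together with $\mathcal F[\ddiv\mff(\cdot,u)]=i\sum_j\xi_j\,\widehat{f_j(\cdot,u)}$, where $\widehat{f_j(\cdot,u)}(t,\xi)$ denotes the transform of $\mx\mapsto f_j(\mx,u(\mx,t))$, equation \eqref{main-eq} becomes, for each fixed $\xi$, the scalar linear ODE
\[
(1+|\xi|^2)\,\pa_t\hat u(t,\xi)=-|\xi|^2\hat u(t,\xi)-i\sum_{j=1}^d\xi_j\,\widehat{f_j(\cdot,u)}(t,\xi).
\]
Dividing by $1+|\xi|^2>0$ and applying Duhamel's formula, I would rewrite the problem in physical space as a fixed-point equation $u=\Phi(u)$ on $C([0,T),L^2(\R^d))$, namely
\[
\Phi(u)(t)=K(t)u_0-\int_0^t K(t-s)\,\mathcal T\big[\mff(\cdot,u(s))\big]\,ds,
\]
where $K(t)$ is the Fourier multiplier with symbol $e^{-t|\xi|^2/(1+|\xi|^2)}$ and $\mathcal T=\sum_j\mathcal T_j$, with $\mathcal T_j$ the multiplier of symbol $i\xi_j/(1+|\xi|^2)$.

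Two properties of these symbols are decisive. First, since $|\xi|^2/(1+|\xi|^2)\in[0,1)$ the exponent is nonpositive, so $K(t)$ is a uniform contraction on $L^2(\R^d)$, $\norm{K(t)}_{L^2\to L^2}\le 1$ for all $t\ge 0$. Second, $|\xi_j/(1+|\xi|^2)|\le C$ is a bounded symbol, so $\mathcal T$ is bounded on $L^2$; in fact $\langle\xi\rangle\,|\xi_j|/(1+|\xi|^2)\le C$ shows $\mathcal T:H^k\to H^{k+1}$, i.e.\ $\mathcal T$ gains one derivative. Because $\mff\in C_c^\infty(\R^d\times\R)$ is compactly supported in $\mx$ and globally Lipschitz in $\lambda$, the composition $\mff(\cdot,u)$ lies in $L^2$ with norm controlled by $\norm{\mff}_{L^\infty}$ and the size of its $\mx$-support, and for $u_1,u_2\in L^2$,
\[
\norm{\mathcal T[\mff(\cdot,u_1)]-\mathcal T[\mff(\cdot,u_2)]}_{L^2}\le C\,\max_j\norm{\pa_\lambda f_j}_{L^\infty}\,\norm{u_1-u_2}_{L^2}.
\]
Hence $\Phi$ is globally Lipschitz on $C([0,T'],L^2)$ with a constant $\le CT'$; for $T'$ small it is a contraction, and the Banach fixed point theorem gives a unique local solution. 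Since the Lipschitz constant is global (independent of the size of $u$), Grönwall's inequality forbids blow-up of $\norm{u(t)}_{L^2}$ and the solution extends to $[0,T)$ for every $T>0$. Uniqueness on the whole interval follows from the same estimate. As $\norm{u(t)}_{L^2}$ is bounded on $[0,T]$, one gets $u\in L^\infty([0,T],L^2)\subset L^2((0,T)\times\R^d)$.

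To upgrade to $C^\infty$, I would bootstrap spatial regularity, exploiting that the only derivative-gaining factor in $\Phi$ is $\mathcal T$, that $K(t)$ preserves every $H^k$, and that $K(t)u_0\in\bigcap_k H^k$ because $u_0\in C_c^\infty$. Using a Moser-type composition estimate $\norm{\mff(\cdot,u)}_{H^k}\le C_k(\norm{u}_{L^\infty},\mff)(1+\norm{u}_{H^k})$, valid since $\mff$ is smooth and compactly supported, the scheme is: if $u(s)\in H^k$ uniformly on $[0,T]$, then $\mathcal T[\mff(\cdot,u(s))]\in H^{k+1}$, so both the Duhamel integral and $K(t)u_0$ lie in $H^{k+1}$, whence $u(t)\in H^{k+1}$. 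Iterating yields $u(t)\in\bigcap_k H^k$ for every $t\in(0,T)$, so $u(t,\cdot)\in C^\infty(\R^d)$. Smoothness in $t$ then comes from the equation itself: rewriting \eqref{main-eq} as $\pa_t((I-\Delta)u)=\Delta u-\ddiv\mff(\cdot,u)$ gives $\pa_t u=(I-\Delta)^{-1}\big(\Delta u-\ddiv\mff(\cdot,u)\big)$, which expresses each $t$-derivative through spatial operators applied to already-smooth data; by induction all mixed derivatives exist and are continuous, i.e.\ $u\in C^\infty((0,T)\times\R^d)$.

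The genuine work lies in this last step rather than in the existence argument: one must make the composition estimates precise in arbitrary space dimension and check that the single-derivative gain from $\mathcal T$ closes the bootstrap uniformly in $t$, in particular controlling $\norm{u(t)}_{L^\infty}$ once the iteration reaches $H^k$ with $k>d/2$ and the embedding $H^k\hookrightarrow L^\infty$ becomes available. By contrast the existence and uniqueness are comparatively soft, resting on the two slightly unusual facts that the pseudo-parabolic symbol $e^{-t|\xi|^2/(1+|\xi|^2)}$ furnishes a uniform $L^2$-contraction despite not being smoothing, and that all the compensating smoothing is supplied by the single factor $\xi_j/(1+|\xi|^2)$ multiplying the flux.
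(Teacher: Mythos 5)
Your existence-and-uniqueness argument is correct but follows a genuinely different route from the paper. The paper freezes the nonlinearity (defining $v\mapsto u={\cal T}(v)$ through the linear problem), proves an $H^1$ estimate for this map, and then applies the Leray--Schauder theorem on an exhaustion of $\R^d$ by balls $B_n$, using Rellich's theorem for compactness and extracting a limit from the resulting solutions; uniqueness is a separate Gronwall argument. You instead read the same Fourier representation as a Duhamel formula and run a Banach contraction in $C([0,T'],L^2)$, using that $K(t)$ has $L^2$ operator norm at most $1$ and that $u\mapsto \mff(\cdot,u)$ is globally Lipschitz on $L^2$ (since $\pa_\lambda\mff$ is bounded and $\mff$ has compact $\mx$-support). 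For the existence/uniqueness core this buys real simplification: uniqueness comes for free from the contraction, no compactness machinery or domain truncation is needed, and the uniform time step makes global extension immediate.

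However, the smoothness bootstrap has a genuine circularity in high dimensions. Your iteration ``$u\in H^k$ uniformly $\Rightarrow \mff(\cdot,u)\in H^k \Rightarrow u\in H^{k+1}$'' rests on a Moser composition estimate whose constant depends on $\norm{u}_{L^\infty}$, and you propose to control $\norm{u}_{L^\infty}$ only \emph{once the iteration reaches} $k>d/2$. But for $d\ge 5$ there are intermediate integers $2\le k<d/2$ at which superposition with a non-affine smooth function need not map $H^k$ into $H^k$ at all (Dahlberg's theorem on composition operators in $W^{k,p}$ in the range $1+1/p\le k<d/p$); concretely, $\nabla^2\big[\mff(\cdot,u)\big]$ contains the term $\pa_\lambda^2\mff(\cdot,u)\,\nabla u\otimes\nabla u$, and $\nabla u\in H^1(\R^d)\not\subset L^4(\R^d)$ when $d\ge 5$. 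So without an $L^\infty$ bound the iteration stalls at $k=2$ and never reaches $k>d/2$: the embedding you invoke cannot be the source of the $L^\infty$ control it presupposes. The fix stays inside your framework but needs kernel rather than Plancherel estimates: note $K(t)=e^{-t}e^{t(I-\Delta)^{-1}}$ and $\mathcal{T}_j=\pa_{x_j}(I-\Delta)^{-1}$, and the Bessel kernel $G$ of $(I-\Delta)^{-1}$ satisfies $G\ge 0$, $\int G=1$, $\nabla G\in L^1(\R^d)$; hence both operators are bounded on $L^\infty$ uniformly for $t\in[0,T]$, and the mild formulation gives the a priori bound $\norm{u(t)}_{L^\infty}\le \norm{u_0}_{L^\infty}+CT\norm{\mff}_{L^\infty}$. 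With this bound in hand the Moser estimate is linear in $\norm{u}_{H^k}$ at every order and your bootstrap closes in all dimensions. (The paper's own one-sentence bootstrap claim glosses over the same point, but since your write-up commits to the specific mechanism, as stated it does not go through for $d\ge 5$ without this extra ingredient.)
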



In the next step (Section 3), we are going to let the perturbation parameter $\eps\to 0$ in \eqref{vc-1}.  This type of problem for conservation laws (when
there is not only vanishing viscosity, but also third or higher order perturbation) was first addressed in \cite{Sch} and received considerable attention after that. A thorough analysis of this kind of limit {\it in the one-dimensional situation with a regular flux} can be found in \cite{Lfl-K}, where the theory of non-classical shocks for conservation laws was essentially initiated. The standard approach here is to rewrite the equation under consideration in the kinetic formulation or using Young measures (which is essentially equivalent), and then applying velocity averaging results \cite{AMP, HTz}, compensated compactness (in one dimensional situations) \cite{coclitte1, coclitte, Sch}, or Di Perna techniques involving Young measures \cite{Dpe}, as done in \cite{Lfl-b} and many others. Again we note that this list of citations is far from complete. The problem in our case is the low regularity of the flux function $\mff$ (it can be discontinuous with respect to $\mx\in \R^d$) as well as the multidimensional character of our problem, which prevents us from using the Young measures and compensated compactness approach here (see item (iii) below and Remark \ref{39} for the case of a regular flux). We also remark that, to the best of our knowledge, the only diffusion-dispersion type result for equations with discontinuous coefficients is \cite{HKM}.

Moreover, unlike the situation that is typical in the case of the diffusion-dispersion limit (\cite{Sch, AMP, Lfl-b} etc.), where the existence of a solution to the perturbed equation is assumed together with all necessary properties of the solution, we have proved in the previous section the existence of the functional sequence whose convergence we analyze. 

As for the velocity averaging theory, most of the results are given in the case of a homogeneous flux \cite{DLM, GSR, PS, TT} or a flux for which $p \geq 2$ \cite{Ger, LM2}. We have recently proved \cite{LM5} the velocity averaging lemma in the case $p>1$ and this will enable us to prove the strong convergence of the sequence $(u_\eps)$ of solutions to \eqref{vc-1}, \eqref{id-pert} along a subsequence towards a weak solution to \eqref{cl-1}. The following statements are the main results of the paper:

\begin{itemize}

\item[(i)] If $\delta=o(\eps^2)$  and  
\begin{equation}
\label{neps}
\frac{\sqrt{\delta}}{\eps n(\eps)^{d+2}}
\to 0
\end{equation} as $\eps\to 0$,
then the family $(u_\eps)$ {contains a strongly converging subsequence} in $L^1_{loc}(\R^+\times \R^d)$ under a suitable non-degeneracy condition (see Definition \ref{def-non-deg});

\item[(ii)] If $\delta={\cal O}(\eps^2)$ and $\frac{\sqrt{\delta}}{\sqrt{\eps} n(\eps)^{d/2+2}}\to 0$ as $\eps\to 0$, and the $\lambda$-derivative of the flux $\pa_\lambda \mff$ is bounded in addition to condition
(C3), then the family $(u_\eps)$ is strongly precompact in $L^1_{loc}(\R^+\times \R^d)$ under the non-degeneracy condition;

\item[(iii)] If $\delta=o(\eps^2)$ and $\frac{\sqrt{\delta}}{\sqrt{\eps} n(\eps)^{d/2+2}}\to 0$ as $\eps\to 0$, and if the flux $\mff \in C^1(\R^d\times \R)$, 
then the family $(u_\eps)$ converges strongly in $L^1_{loc}(\R^+\times \R^d)$ towards the entropy solution to \eqref{cl-1}, \eqref{id} (no non-degeneracy condition is needed).

\end{itemize}

We conclude the introduction by noting that the subsequence of $(u_\eps)$ given in (i) does not converge toward an entropy admissible solution to the underlying conservation law since its flux is not smooth and for such fluxes the well posedness theory is not developed yet in full generality (compare with item (iii) above). 

Also, in the case (ii), even when the flux is smooth, we do not necessarily have convergence toward the Kruzhkov entropy solution (see \cite{vDPS} for stepwise initial data). On the other hand, the approximating procedure actually models a physical situation in which diffusion and capillary limits have equal effect on the process and, as any physical phenomenon in the macro-world is well-posed, it is therefore expected to generate a stable semigroup of solutions to the underlying conservation law. We will deal with this question in a future research.
 
The paper is organized as follows In Section 2, we prove existence and uniqueness for the pseudo-parabolic problem \eqref{main-eq}, \eqref{id-nonpert}. In Section 3, we derive necessary estimates from \eqref{vc-1}, \eqref{id-pert} and use it to prove the strong convergence toward a weak solution to the underlying conservation law.

\section{Existence and uniqueness of the solution to the pseudo-parabolic equation \eqref{main-eq} with \eqref{id-nonpert}} \label{sec-2}
Throughout this section we suppose that $\mff$ is {\bf smooth and compactly supported}. Under this assumption, we want 
to show that \eqref{main-eq} with the initial condition \eqref{id-nonpert} has a unique solution. 
The strategy for solving this problem is to define a mapping ${\cal T}: L^2([0,T)\times \R^d) \to L^2([0,T)\times \R^d)$ such that for every $v\in L^2([0,T)\times \R^d)$ the function 
\begin{equation}
\label{T}
u={\cal T}(v)
\end{equation} represents a solution to 
\begin{equation}
\label{main-eq-1}
u_{t}+\ddiv(\mff(\mx, v)) =\triangle u+\partial_{t}\triangle u, \ \ (t,x)\in [0,T)\times \R^d,
\end{equation} 
with the initial conditions \eqref{id-nonpert}. Generally, for $u_0\in L^2(\R^d)$, we are seeking weak solutions $u$ of 
this initial value problem in the sense that, 
for any smooth test function $\varphi$ with compact support in $[0,T)$, we require 
\[
\int_0^T\int_{\R^d} u\varphi_t + f(x,v)\ddiv \varphi - u\Delta\varphi + u\partial_t  \Delta\varphi\,dxdt 
+ \int_{\R^d} u_0(\varphi(0,x)+\Delta\varphi(0,x))\,dx=0.
\]

Then, we shall prove that the mapping ${\cal T}$ possesses a fixed point,
which will turn out to be the solution to \eqref{main-eq}, \eqref{id-nonpert}. 

To this end, we need the following consequence of the Leray-Schauder fixed point theorem
(cf.\ \cite[Th.\ 11.3]{GT}):
\begin{theorem}
\label{fixed}
 Let ${\cal T}$ be a compact mapping  of a Banach space $\mathcal{B}$ into
 itself and suppose that there exists a constant $C$ such that
\begin{equation}
\label{cond-fp}
\|u\|_{{\cal B}}\leq C
\end{equation} for all $u\in {\cal B}$ and $\sigma \in [0,1]$ satisfying $u=\sigma {\cal T}u$. Then ${\cal T}$ has a fixed point, that is, ${\cal T}u=u$ for some $u\in \mathcal{B}$.
\end{theorem}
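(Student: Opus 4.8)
The plan is to derive Theorem \ref{fixed} from the Schauder fixed point theorem by combining the a priori bound \eqref{cond-fp} with a radial retraction onto a sufficiently large ball. Fix any $M>C$ and let $\bar B_M=\{u\in{\cal B}:\norm{u}\le M\}$ denote the closed ball of radius $M$, a closed, bounded, convex subset of ${\cal B}$. I would introduce the radial retraction $r:{\cal B}\to\bar B_M$ defined by $r(u)=u$ when $\norm{u}\le M$ and $r(u)=Mu/\norm{u}$ when $\norm{u}>M$. This map is continuous (indeed nonexpansive) and sends all of ${\cal B}$ into $\bar B_M$.

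First I would verify that the composition $r\circ{\cal T}:\bar B_M\to\bar B_M$ is a continuous, compact self-map. Continuity is immediate from the continuity of ${\cal T}$ and of $r$. For compactness, note that ${\cal T}$ carries the bounded set $\bar B_M$ into a relatively compact subset of ${\cal B}$, and $r$, being continuous, sends this precompact set to a precompact subset of $\bar B_M$; hence $r\circ{\cal T}(\bar B_M)$ is relatively compact. Schauder's fixed point theorem then produces a point $u^*\in\bar B_M$ satisfying $u^*=r({\cal T}u^*)$.

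The main step is a dichotomy argument upgrading this to a genuine fixed point of ${\cal T}$, and this is where the hypothesis \eqref{cond-fp} becomes essential. If $\norm{{\cal T}u^*}\le M$, then $r$ acts as the identity at ${\cal T}u^*$, so $u^*={\cal T}u^*$ and we are done. Otherwise $\norm{{\cal T}u^*}>M$, and by the definition of $r$,
\[
u^*=\frac{M}{\norm{{\cal T}u^*}}\,{\cal T}u^*=\sigma\,{\cal T}u^*,\qquad \sigma:=\frac{M}{\norm{{\cal T}u^*}}\in(0,1).
\]
In particular $\norm{u^*}=M>C$ while $u^*=\sigma{\cal T}u^*$ with $\sigma\in[0,1]$, which directly contradicts \eqref{cond-fp}. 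Hence the second alternative cannot occur, and $u^*$ is the desired fixed point.

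The only genuine obstacle is the compactness verification for $r\circ{\cal T}$; once that is secured, Schauder's theorem and the elementary dichotomy complete the argument. I would emphasize that the a priori bound enters only through the existence of some radius $M$ strictly exceeding the constant $C$, so no quantitative information beyond \eqref{cond-fp} is required.
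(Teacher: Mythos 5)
Your proof is correct and is essentially the paper's own route: the paper does not prove this statement but cites Gilbarg--Trudinger, Th.\ 11.3, whose proof is exactly your argument --- compose ${\cal T}$ with the radial retraction onto a ball $\bar B_M$ with $M>C$, apply Schauder's fixed point theorem, and use the a priori bound \eqref{cond-fp} to rule out the case $\norm{{\cal T}u^*}>M$. The only quibble is the parenthetical claim that $r$ is nonexpansive: in a general Banach space the radial retraction is merely Lipschitz (with constant at most $2$), not necessarily nonexpansive, but since your argument only uses continuity of $r$, this does not affect the proof.
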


Finally, let us fix our conventions for Fourier transform and inverse Fourier transform. 
For $u\in L^1(\R^d)\cap L^2(\R^d)$, the Fourier transform of $u$ is
$$
{\cal F}(u)(\mxi)=\hat{u}(\mxi)=\int_{\R^d} e^{- i \mx \cdot \mxi} u(\mx)\, d\mx
$$ and the inverse Fourier transform is given by
$$
{\cal F}^{-1}(u)(\mx)=\check{u}(x)= \frac{1}{(2\pi)^d}\int_{\R^d} e^{i \mx \cdot \mxi} u(\mxi)\, d\mxi,
$$ 
where $\cdot$ denotes the scalar product and $i$ is the imaginary unit.

We begin by proving existence and uniqueness of solutions to \eqref{main-eq-1}, \eqref{id-nonpert}. 

\begin{lemma}
\label{ex-v}
Let $v\in L^2([0,T]\times \R^d)$.  Then for any $u_0\in L^2(\R^d)$ 
 there exists a unique solution $u\in L^2([0,T]\times \R^d)$ 
to  \eqref{main-eq-1} with $u|_{t=0}=u_0$.
\end{lemma}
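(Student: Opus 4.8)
The plan is to follow the route announced before the statement: apply the spatial Fourier transform $\mathcal F$ in $\mx$ to \eqref{main-eq-1}, reduce it to a family of linear first-order ODEs in $t$ indexed by the frequency $\mxi$, solve these by an integrating factor, and invert. Writing $\hat u(\mxi,t)=\mathcal F u(\cdot,t)(\mxi)$ and using $\widehat{\Delta u}=-|\mxi|^2\hat u$ together with $\widehat{\ddiv\,\mff(\cdot,v)}=i\mxi\cdot\widehat{\mff(\cdot,v)}$, equation \eqref{main-eq-1} turns into
\begin{equation*}
(1+|\mxi|^2)\,\partial_t\hat u(\mxi,t)+|\mxi|^2\,\hat u(\mxi,t)=-\,i\mxi\cdot\widehat{\mff(\cdot,v(\cdot,t))}(\mxi),\qquad \hat u(\mxi,0)=\hat u_0(\mxi).
\end{equation*}
Since $\mff$ is smooth, bounded and compactly supported, $\mff(\cdot,v(\cdot,t))$ vanishes outside a fixed compact $\mx$-set and is bounded by $\|\mff\|_{L^\infty}$; hence $t\mapsto\mff(\cdot,v(\cdot,t))\in L^\infty([0,T];L^2(\R^d))$ irrespective of the merely $L^2$ regularity of $v$, so the right-hand side is a well-defined element of $L^2$ for a.e.\ $t$.

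Setting $a(\mxi):=|\mxi|^2/(1+|\mxi|^2)\in[0,1)$, the integrating factor $e^{a(\mxi)t}$ produces the Duhamel formula
\begin{equation*}
\hat u(\mxi,t)=e^{-a(\mxi)t}\hat u_0(\mxi)-\int_0^t e^{-a(\mxi)(t-s)}\,\frac{i\mxi\cdot\widehat{\mff(\cdot,v(\cdot,s))}(\mxi)}{1+|\mxi|^2}\,ds.
\end{equation*}
I would then estimate $\hat u(\cdot,t)$ in $L^2_\mxi$. Because $a(\mxi)\ge 0$ one has $e^{-a(\mxi)(t-s)}\le 1$, and because $|\mxi|/(1+|\mxi|^2)\le \tfrac12$ the multiplier in front of the source is uniformly bounded; Plancherel combined with Minkowski's integral inequality then gives $\|\hat u(\cdot,t)\|_{L^2}\le \|\hat u_0\|_{L^2}+\tfrac12\int_0^t\|\widehat{\mff(\cdot,v(\cdot,s))}\|_{L^2}\,ds\le C(\|u_0\|_{L^2}+T)$, uniformly in $t\in[0,T]$. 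Consequently $u:=\mathcal F^{-1}\hat u\in L^\infty([0,T];L^2(\R^d))\subset L^2([0,T]\times\R^d)$, and that this $u$ solves \eqref{main-eq-1} in the weak sense stated above is verified by reversing the Fourier computation --- testing the Duhamel identity and undoing the transform, the boundary terms at $t=0$ reproducing precisely the prescribed initial-data contribution.

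For uniqueness, the difference $w$ of two solutions satisfies the homogeneous problem, since the $v$-dependent source cancels; on the Fourier side $\partial_t\hat w=-a(\mxi)\hat w$ with $\hat w(\mxi,0)=0$ forces $\hat w\equiv0$, hence $w=0$. To make this rigorous at the level of $L^2$ weak solutions --- where the transform cannot be taken pointwise in $\mxi$ a priori --- I would recast the problem as an abstract ODE in the Hilbert space $X:=L^2(\R^d)$. Applying $(I-\Delta)^{-1}$ to \eqref{main-eq-1} yields
\begin{equation*}
\partial_t u = A u + h(t),\qquad A:=(I-\Delta)^{-1}\Delta,\quad h(t):=-(I-\Delta)^{-1}\ddiv\,\mff(\cdot,v(\cdot,t)),
\end{equation*}
where $A$ and the operator defining $h$ are Fourier multipliers with symbols $-|\mxi|^2/(1+|\mxi|^2)$ and $-i\mxi/(1+|\mxi|^2)$, each bounded by $1$. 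Thus $A$ is a bounded operator on $X$ and $h\in L^\infty([0,T];X)$, so the Cauchy problem has the unique solution $u(t)=e^{tA}u_0+\int_0^t e^{(t-s)A}h(s)\,ds$ by standard Banach-space ODE theory; this delivers existence and uniqueness simultaneously and agrees with the Duhamel formula above.

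The main obstacle I anticipate is the interface between the two viewpoints: the Fourier/ODE construction is transparent, but identifying its output with a weak solution in the stated sense --- and, conversely, showing that every $L^2$ weak solution is captured by the ODE, so that the uniqueness is genuine rather than uniqueness merely within the constructed class --- requires care with the inversion of $I-\Delta$ and with the $t=0$ boundary terms in the weak formulation. By contrast, the low regularity of $v$ is not essential here: it enters only through the source, which the boundedness and compact support of $\mff$ turn into an $L^\infty_t L^2_x$ function, so no structural hypothesis on $\ddiv_\mx\mff(\mx,v)$ is required at this stage.
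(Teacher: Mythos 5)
Your proposal is correct and follows essentially the same route as the paper: Fourier transform in $\mx$, division by $1+|\mxi|^2$ to obtain a first-order ODE in $t$, the same integrating-factor/Duhamel formula, the same $L^2$ bounds from the multipliers $|\mxi|^2/(1+|\mxi|^2)\le 1$ and $|\mxi|/(1+|\mxi|^2)\le \tfrac12$ together with boundedness and compact support of $\mff$, and uniqueness from the ODE with zero data. Your operator-theoretic repackaging $\partial_t u=Au+h$ with $A=(I-\Delta)^{-1}\Delta$ bounded on $L^2(\R^d)$ is just the abstract form of the paper's pointwise-in-$\mxi$ argument (the paper instead notes that the rewritten equation forces $\partial_t\hat u\in L^2$, hence continuity in $t$ and a classical trace at $t=0$), and if anything it states the uniqueness-within-the-class-of-weak-solutions step a bit more explicitly.
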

\begin{proof}
After applying the Fourier transform with respect to $\mx \in \R^d$ to \eqref{main-eq-1}, 
we obtain 
\begin{equation*}
\pa_t \hat{u}(t,\mxi)+i \mxi \cdot \widehat{\mff(\cdot,v(t,\cdot))}(\mxi)=-|\mxi|^2 \hat{u}(t,\mxi) - |\mxi|^2  \pa_t \hat{u}(t,\mxi),
\end{equation*}
where $|\mxi|=(\xi_1^2+\dots \xi_d^2)^{\frac{1}{2}}$. We rewrite the last equation in the form 

\begin{equation*}
\frac{d \hat{u}(t,\mxi)}{dt} +\frac{|\mxi|^2}{{1+|\mxi|^2}}\hat{u}(t,\mxi) =-\frac{i \mxi \cdot \widehat{\mff(\cdot,v(t,\cdot))}(\mxi)}{1+|\mxi|^2}
\end{equation*} 
It follows from this that any weak solution to \eqref{main-eq-1} has $t$-derivative in $L^2$ as well, hence in particular is
continuous with respect to $t$ and therefore possesses a classical trace on $t=0$. 
Solving the ODE with the initial data $\hat{u}(0,\mxi)=\hat{u}_0(\mxi)$ we arrive at
\begin{align}
\label{sol-v}
\hat{u}(t,\mxi)=e^{-\frac{|\mxi|^2 t}{1+|\mxi|^2}} \left( \hat{u}_0(\mxi)-\int_0^t\frac{i \mxi \cdot \hat{\mff}}{1+|\mxi|^2} e^{\frac{|\mxi|^2 t'}{1+|\mxi|^2}}\,dt'\right),
\end{align}
where $\hat{\mff}=\widehat{\mff(\cdot,v(t,\cdot))}(\mxi)$. By finding the inverse Fourier transform here with respect to $\mxi \in \R^d$, we 
indeed obtain a weak solution, so combined with the above considerations both existence and uniqueness follow. 
\end{proof}


\begin{theorem}
\label{estimates-1}
For any function $v\in L^2([0,T]\times \R^d)$, the solution to \eqref{main-eq-1}, \eqref{id-nonpert} belongs to $H^1((0,T)\times \R^d)$.
\end{theorem}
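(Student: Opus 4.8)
The plan is to work entirely on the Fourier side, using the explicit representation \eqref{sol-v} from Lemma \ref{ex-v} together with Plancherel's theorem. Since membership $u\in H^1((0,T)\times\R^d)$ is equivalent to the three functions $\hat u$, $|\mxi|\,\hat u$ and $\pa_t\hat u$ all lying in $L^2((0,T)\times\R^d)$, it suffices to produce uniform-in-$t$ bounds in $L^2_\mxi$ for each of them and then integrate over $t\in[0,T]$. Note that $\pa_t$ commutes with the spatial Fourier transform, so $\widehat{\pa_t u}=\pa_t\hat u$.

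The first observation, and the one that makes the nonlinear term harmless, is that $\mff$ is bounded and compactly supported: hence $\mff(\mx,v(t,\mx))$ is bounded by $\|\mff\|_{L^\infty}$ and supported in a fixed ball, so $\|\mff(\cdot,v(t,\cdot))\|_{L^2(\R^d)}\le C$ with $C$ independent of both $t$ and $v$. By Plancherel the same bound holds for $\|\hat\mff(t,\cdot)\|_{L^2}$, where $\hat\mff(t,\mxi)=\widehat{\mff(\cdot,v(t,\cdot))}(\mxi)$. Setting $a(\mxi)=|\mxi|^2/(1+|\mxi|^2)$ and $g(t,\mxi)=i\mxi\cdot\hat\mff(t,\mxi)/(1+|\mxi|^2)$, I record the elementary multiplier bounds $0\le a(\mxi)<1$, $|\mxi|/(1+|\mxi|^2)\le 1/2$ and $|\mxi|^2/(1+|\mxi|^2)\le 1$.

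For the $L^2$ bound on $\hat u$ and on $|\mxi|\,\hat u$ I estimate \eqref{sol-v} directly. Since $t'-t\le0$ and $a\ge0$ give $e^{a(\mxi)(t'-t)}\le1$, the Duhamel integral is controlled by $\int_0^T|g(t',\mxi)|\,dt'$, and multiplying by $|\mxi|^k$ with $k\in\{0,1\}$ the multiplier bounds turn $|\mxi|^k|g|$ into a quantity dominated by $|\hat\mff|$ (using $|\mxi|/(1+|\mxi|^2)\le1/2$ for $k=0$ and $|\mxi|^2/(1+|\mxi|^2)\le1$ for $k=1$). The data term contributes $|\mxi|^k e^{-a(\mxi)t}|\hat u_0|\le|\mxi|^k|\hat u_0|$, which lies in $L^2_\mxi$ because $u_0\in C_c^\infty$ forces $\hat u_0$ to be Schwartz and $\||\mxi|\hat u_0\|_{L^2}=\|\nabla u_0\|_{L^2}<\infty$. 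Taking $L^2_\mxi$ norms and applying Minkowski's integral inequality, $\int_0^T\|\hat\mff(t',\cdot)\|_{L^2}\,dt'\le TC$, then yields the desired bounds, uniform in $t$, on $\hat u$ and on $|\mxi|\,\hat u$.

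The time derivative is cleanest to handle from the ODE itself rather than by differentiating the Duhamel formula: equation \eqref{main-eq-1} on the Fourier side reads $\pa_t\hat u=-a(\mxi)\hat u-g(t,\mxi)$. The first term is bounded pointwise by $|\hat u|$ (as $a<1$), which already lies in $L^2((0,T)\times\R^d)$ by the previous step, and $|g|\le\tfrac12|\hat\mff|$ lies in $L^2((0,T)\times\R^d)$ since $\|\hat\mff(t,\cdot)\|_{L^2}\le C$ uniformly. Hence $\pa_t\hat u\in L^2$, and the three estimates combine to give $u\in H^1((0,T)\times\R^d)$. There is no serious obstacle here; the only points requiring care are keeping all constants independent of $v$ (which is exactly what boundedness and compact support of $\mff$ buy us) and routing the $\pa_t$ estimate through the ODE so as to avoid differentiating the exponential factor under the Duhamel integral.
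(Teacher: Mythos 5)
Your proof is correct and follows essentially the same route as the paper: both work on the Fourier side via the representation \eqref{sol-v}, use the elementary multiplier bounds $|\mxi|^k|\mxi|/(1+|\mxi|^2)^{\phantom{1}}\!\le C$ together with Plancherel and the boundedness and compact support of $\mff$ to control the spatial derivatives, and obtain the time derivative from the ODE (the paper's \eqref{est-t} is exactly $\pa_t\hat u=-a(\mxi)\hat u-g$). The only differences are cosmetic: you use Minkowski's integral inequality and the combined factor $e^{a(\mxi)(t'-t)}\le 1$, where the paper uses Cauchy--Schwarz in time and bounds the exponentials separately by $e^{T}$.
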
 
\begin{proof}
Let $u$ be the solution to \eqref{main-eq-1}, \eqref{id-nonpert} defined in Lemma \ref{ex-v}. We need to estimate $\pa_{x_j} u$, $j=1,\dots,d$ and $\pa_t u$, where $\pa_{x_j}$, $j=1,\dots, d$ and $\pa_t$ are the weak derivatives of $u$. We have according to the Plancherel theorem (below, we take the Fourier transform with respect to $\mx\in \R^d$) and the Cauchy-Schwartz inequality
\begin{equation}\label{est-x}
\begin{split}
\|\pa_{x_j} & u\|_{L^2([0,T]\times \R^d)} = \| i \xi_j \hat{u}\|_{L^2([0,T]\times \R^d)} \\
&\leq \| \xi_j \hat{u}_0 \|_{L^2([0,T]\times \R^d)}+\left\|\int_0^t\frac{i \xi_j \mxi\cdot \hat{\mff}}{1+|\mxi|^2}e^{\frac{|\mxi|^2 t'}{1+|\mxi|^2}}dt'\right\|_{L^2([0,T]\times \R^d)} 
\\& \leq \| \xi_j \hat{u}_0 \|_{L^2([0,T]\times \R^d)}+\left( \int_0^T \int_{\R^d} \left| \int_0^t \frac{i \xi_j \mxi\cdot \hat{\mff}}{1+|\mxi|^2}e^{\frac{|\mxi|^2 t'}{1+|\mxi|^2}}dt' \right|^2 d\mxi dt\right)^{1/2}
\\& \leq \| \xi_j \hat{u}_0 \|_{L^2([0,T]\times \R^d)}+\left( \int_0^T \int_{\R^d} t \int_0^t \frac{\xi^2_j |\mxi\cdot \hat{\mff}|^2}{(1+|\mxi|^2)^2}e^{\frac{2|\mxi|^2 t'}{1+|\mxi|^2}}dt' d\mxi dt\right)^{1/2}\\
&\leq \| \xi_j \hat{u}_0 \|_{L^2([0,T]\times \R^d)}+T e^{T} 
\left(\int_0^T\int_{\R^d} \frac{\xi_j^2 \, |\mxi|^2 \, |\hat{\mff}|^2}{(1+|\mxi|^2)^2} d\mxi dt\right)^{1/2}\\
&\leq  \| \xi_j \hat{u}_0 \|_{L^2([0,T]\times \R^d)}+T e^{T} \left(\int_0^T\int_{\R^d} |\mff(\mx,v(t,\mx))|^2 d\mx dt\right)^{1/2},
\end{split}
\end{equation}where in the last step we used the Plancherel theorem again. From here, since $f$ is compactly supported with respect to $\mx \in \R^d$ and bounded, we conclude that $\pa_{x_j} u \in L^2([0,T]\times \R^d)$. 

As for the $t$-derivative, we have
\begin{align}
\label{est-t}
\pa_t \hat{u}=
-\frac{|\mxi |^2}{1+|\mxi|^2} e^{\frac{-|\mxi |^2 t}{1+|\mxi|^2}} \left(\hat{u}_0(\mxi)-\int_0^t\frac{i\mxi \cdot \hat{\mff}}{1+|\mxi|^2} dt' \right)+\frac{i\mxi \cdot \hat{\mff}}{1+|\mxi|^2}, 
\end{align} and therefore, repeating the procedure giving us the estimates on $\pa_{x_j} u$, we get
$$
\|\pa_t u \|_{L^2([0,T]\times \R^d)}=\|\pa_t \hat{u} \|_{L^2([0,T]\times \R^d)} \leq C
$$ for a constant $C\in \R$.
\end{proof} 
Based on this we can proceed to applying Theorem \ref{fixed} to prove existence of a solution to \eqref{main-eq}, \eqref{id-nonpert}. 

\begin{theorem}
\label{ex-un}
There exists a unique solution to \eqref{main-eq}, \eqref{id-nonpert}. This solution belongs to $L^2([0,T]\times \R^d)\cap C^\infty([0,T]\times \R^d)$.
\end{theorem}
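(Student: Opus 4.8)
The plan is to realise the nonlinear problem \eqref{main-eq}, \eqref{id-nonpert} as a fixed point of the solution map $\cal T\colon v\mapsto u$ furnished by Lemma \ref{ex-v}, and to apply Theorem \ref{fixed} with $\cal B=L^2([0,T]\times\R^d)$. First I would record continuity of $\cal T$: since $\cal T$ depends on $v$ only through the $\hat{\mff}$-term of \eqref{sol-v}, the same Plancherel estimate as in \eqref{est-x} gives an $L^2\to L^2$ bound on that term, and composing with the Lipschitz bound $|\mff(\mx,v)-\mff(\mx,v')|\le L|v-v'|$ (valid because $\pa_\lambda\mff$ is bounded) yields $\|\cal T(v)-\cal T(v')\|_{L^2}\le C\|v-v'\|_{L^2}$. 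The key structural observation is that the $H^1$-bound of Theorem \ref{estimates-1} is in fact \emph{uniform} over all $v\in L^2$: as $\mff$ is bounded and supported in a fixed compact set $K$ in $\mx$, the quantity $\int_0^T\int_{\R^d}|\mff(\mx,v(t,\mx))|^2\,d\mx\,dt\le T\,\mathrm{meas}(K)\,\|\mff\|_\infty^2$ is independent of $v$, so the last line of \eqref{est-x} together with \eqref{est-t} gives $\|\cal T(v)\|_{H^1}\le C$ with $C$ independent of $v$.

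Compactness of $\cal T$ as a map $L^2\to L^2$ is the main obstacle, because on the unbounded domain $[0,T]\times\R^d$ the embedding $H^1\hookrightarrow L^2$ is not compact. I would invoke the Fr\'echet--Kolmogorov (Kolmogorov--Riesz) criterion: the uniform $H^1$-bound above supplies boundedness and equicontinuity of translations in $(t,\mx)$, so only uniform decay of the spatial tails must be checked. For this I would split $\cal T(v)=u_{\mathrm{hom}}+L(v)$ according to \eqref{sol-v}, where $u_{\mathrm{hom}}$ is the inverse transform of $e^{-|\mxi|^2t/(1+|\mxi|^2)}\hat u_0$, a single fixed (hence precompact) function, while $L(v)$ is the inverse transform of $-\int_0^t\frac{i\mxi\cdot\hat{\mff}}{1+|\mxi|^2}e^{-|\mxi|^2(t-t')/(1+|\mxi|^2)}\,dt'$ with $\hat{\mff}=\widehat{\mff(\cdot,v(t',\cdot))}$. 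The symbol $\frac{\mxi}{1+|\mxi|^2}$ is that of $\nabla(1-\Delta)^{-1}$, whose kernel is of Bessel type and decays exponentially; since the source $\mff(\cdot,v(t',\cdot))$ is supported in the fixed compact set $K$, the functions $L(v)$ enjoy a uniform (exponential) spatial tail bound. This verifies the tail condition uniformly in $v$ and gives precompactness of $\cal T(B)$ for bounded $B$.

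With $\cal T$ compact, the a priori bound \eqref{cond-fp} required by Theorem \ref{fixed} is immediate: any $u$ with $u=\sigma\cal T u$, $\sigma\in[0,1]$, solves the linearised equation with $v=u$ up to the factor $\sigma\le1$, so the uniform $H^1$-bound forces $\|u\|_{L^2}\le\|u\|_{H^1}\le C$ independently of $\sigma$. Theorem \ref{fixed} then produces a fixed point $u=\cal T u\in L^2\cap H^1$, a weak solution of \eqref{main-eq}, \eqref{id-nonpert}. For uniqueness I would take two solutions $u_1,u_2$ and write the representation \eqref{sol-v} for $w=u_1-u_2$ with zero initial datum; using $\big|\frac{\mxi}{1+|\mxi|^2}\big|\le\frac12$ and the Lipschitz bound for $\mff$ I would obtain $\|w(t)\|_{L^2}\le\frac{L}{2}\int_0^t\|w(t')\|_{L^2}\,dt'$, whence $w\equiv0$ by Gr\"onwall.

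Finally, for $C^\infty$-regularity I would bootstrap on \eqref{sol-v}. The homogeneous part is smooth, since $\hat u_0$ is Schwartz and its multiplier is smooth and bounded; for the flux part the symbol $\frac{\mxi}{1+|\mxi|^2}$ gains one spatial derivative, so $\mff(\cdot,u)\in H^s$ implies $L(u)\in H^{s+1}$. Starting from $u\in H^1$ and using Moser-type composition estimates for the smooth, compactly supported $\mff$ (all derivatives of $\mff$ bounded, and $L^\infty$-control of $u$ available once $s>(d+1)/2$ via Sobolev embedding), I would iterate $u\in H^s\Rightarrow\mff(\cdot,u)\in H^s\Rightarrow u=u_{\mathrm{hom}}+L(u)\in H^{s+1}$ to conclude $u\in H^k$ for every $k$, hence $u\in C^\infty$ in the spatial variables. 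Joint smoothness then follows by reading $\pa_t u=(1-\Delta)^{-1}\big(\Delta u-\ddiv\mff(\mx,u)\big)$ off the equation and differentiating repeatedly in $t$.
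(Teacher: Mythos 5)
Your proposal is correct in substance, but it resolves the central difficulty --- compactness of ${\cal T}$ on the unbounded domain --- by a genuinely different mechanism than the paper. The paper never establishes compactness of ${\cal T}$ on $L^2([0,T]\times\R^d)$ itself: it truncates to balls $B_n$, defines ${\cal T}_n(v)=\chi_{B_n}{\cal T}(v\chi_{B_n})$, obtains compactness of each ${\cal T}_n$ from Theorem \ref{estimates-1} together with the Rellich theorem on the bounded sets $[0,T]\times B_n$, applies Theorem \ref{fixed} for each $n$, and then recovers a solution on all of $\R^d$ as a local limit of the fixed points $u_n$ extended by zero, followed by the same bootstrap. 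You instead work directly on $\R^d$ via the Kolmogorov--Riesz criterion, using the uniform $H^1$ bound of Theorem \ref{estimates-1} for translation equicontinuity and the exponential localization of the kernel of $\nabla(1-\Delta)^{-1}$, combined with the compact $\mx$-support of $\mff$, for uniform control of spatial tails. Your route avoids the truncation and the extraction of a limit from the $u_n$ (and in fact, since your $H^1$ bound is uniform over all of $L^2$, ${\cal T}$ maps a fixed ball into a precompact subset of itself, so even Schauder's theorem would suffice); the paper's route avoids all kernel decay analysis, replacing it by the soft Rellich embedding, at the price of the extra limiting step. Your uniqueness argument (Gronwall on the Fourier representation \eqref{sol-v}, using Lemma \ref{ex-v} to know every solution has that representation) and your bootstrap to $C^\infty$ coincide with the paper's.

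One detail in your compactness step is asserted rather than proved and should be patched: the multiplier defining $L(v)$ is not $\frac{\mxi}{1+|\mxi|^2}$ alone but $\frac{i\mxi}{1+|\mxi|^2}\,e^{-|\mxi|^2(t-t')/(1+|\mxi|^2)}$, so the exponential decay of the kernel must be verified for this product, uniformly in $t-t'\in[0,T]$. This does work: writing $e^{-|\mxi|^2 s/(1+|\mxi|^2)}=e^{-s}\,e^{s/(1+|\mxi|^2)}$, one has ${\cal F}^{-1}\left(e^{s/(1+|\mxi|^2)}\right)=\delta+\sum_{k\geq 1}\frac{s^k}{k!}G_{2k}$, where the $G_{2k}$ are the positive Bessel kernels with $G_{2k}=G_2\star\dots\star G_2$, and the weighted estimate $\|e^{\mu|\cdot|}G_{2k}\|_{L^1}\leq \|e^{\mu|\cdot|}G_2\|_{L^1}^k$ (valid for $0<\mu<1$ by the triangle inequality $e^{\mu|\mx|}\leq e^{\mu|\my|}e^{\mu|\mx-\my|}$) makes the series summable, so the full kernel is $e^{-s}$ times $\nabla G_2$ convolved with $\delta$ plus an exponentially localized function, uniformly for $s\in[0,T]$. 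With this insertion your tail bound, and hence the whole argument, goes through.
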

\begin{proof} Let us first take a sequence of balls $B_n=B(0,n)$ centered at $0$ and of radius $n$. Set $u_0^n(\mx)=u_0(\mx) \chi_{B_n}(\mx)$, where $\chi_{B_n}: \R^d\to [0,1]$ is a smooth regularization of the characteristic function of the set $B_n$ such that $\chi_{B_n}(\mx)=1$ for $\mx \in B(0,n-1)$ 
and $\chi_{B_n}(\mx)=0$ for $\mx \notin B(0,n)$. 
Then we define the mapping ${\cal T}_n: L^2([0,T]\times B_n) \to  L^2([0,T]\times B_n)$ by
$$
{\cal T}_n (v) :=\chi_{B_n} {\cal T} (v\chi_{B_n}),
$$ where ${\cal T}$ is the operator from \eqref{T} with the initial data $u_0 \chi_{B_n}$ and we extend $v\chi_{B_n}$ by 
zero so it is defined on $[0,T]\times \R^d$. 
This map is continuous since ${\cal T}$ is continuous. In fact, by the proof of Theorem \ref{estimates-1},
$T$ is in fact continuous as a map from $L^2([0,T)\times \R^d)$ to $ H^1((0,T)\times \R^d)$. 
So we can use Theorem \ref{estimates-1} and the Rellich theorem (keeping in mind that $B_n$ is a bounded set) 
to conclude that ${\cal T}_n$ is a compact mapping. 

Let us check condition \eqref{cond-fp} from Theorem \ref{fixed}. If a function $u\in L^2([0,T]\times B_n)$ satisfies $u=\sigma {\cal T}_n u$, then the function $\tilde{u}:=\sigma {\cal T} (u \chi_{B_n})$ satisfies $\chi_{B_n} \tilde{u}(t,\mx)=u(t,\mx)$ for $\mx\in B_{n}$ and is a solution 
of the Cauchy problem 
\begin{equation*}
\begin{split}
\tilde{u}_{t}+\sigma \ddiv(\mff(\mx, u \chi_{B_n})) &=\triangle \tilde{u}+\partial_{t}\triangle \tilde{u}, \ \ (t,x)\in [0,T)\times \R^d\\
\tilde{u}(0,\mx) &= \sigma u_0(\mx) \chi_{B_n}(\mx)
\end{split} 
\end{equation*} 
If we apply Fourier transform with respect to $\mx\in \R^d$ here and solve the ODE so obtained, 
we arrive at an expression analogous to \eqref{sol-v}: 

\begin{align*}
\hat{\tilde{u}}(t,\mxi)=\sigma e^{-\frac{t|\mxi |^2}{1+|\mxi|^2}} \left( \widehat{u_0\chi_{B_n}}(\mxi)-
\int_0^t\frac{i \mxi \cdot \hat{\mff}}{1+|\mxi|^2} e^{\frac{t' |\mxi |^2}{1+|\mxi|^2}}\,dt'\right),
\end{align*}
where $\hat{\mff}={\cal F}(\mff(\mx,u(t,\mx)\chi_{B_n}(t,\mx))$ is the Fourier transform with respect to $\mx$. 
Now
$ \| u \|_{L^2([0,T]\times B_n)} \leq   \| \tilde{u} \|_{L^2([0,T]\times \R^d)}$ 
and repeating the procedure from \eqref{est-x} and \eqref{est-t}, we conclude that \eqref{cond-fp} holds for the mapping ${\cal T}_n$ (keeping in mind that $n$ is fixed). 

Thus, for every $n\in \N$ there exists a function $u_n\in L^2([0,T]\times B_n)$ solving 
\begin{equation*}
\begin{split}
\pa_t{u_n}+ \ddiv(\mff(\mx, u_n)) &=\triangle u_n+\partial_{t}\triangle u_n, \ \ (t,x)\in [0,T)\times B_{n-1}\\
u_n(0,\mx)&=u_0(\mx), \ \ \mx\in B_{n-1}
\end{split}
\end{equation*} Extending $u_n$ by zero outside of $B_n$, we thereby obtain a sequence $(u_n)$ that is bounded in $L^2([0,T]\times \R^d)$, 
and locally bounded in $H^1((0,T)\times \R^d)$. Thus, $(u_n)$ admits a subsequence that locally converges towards $u\in L^2([0,T]\times \R^d)$. It is clear that the function $u$ is the weak solution to \eqref{main-eq}, \eqref{id-nonpert} and thus it must belong at least to $H^1((0,T)\times \R^d)$. 

Moreover, the proof of Theorem \ref{estimates-1} with $v=u$ is a classical example of a bootstrapping procedure, which enables 
us to conclude that $u$ actually belongs to each  $H^k((0,T)\times \R^d)$ ($k\in \N$), hence is
smooth (due to smoothness of $u_0$ and $\mff$).

Now we turn to the proof of uniqueness. Assume that $u_1$ and $u_2$ are solutions to \eqref{main-eq}, supplemented with the initial conditions $u_1\big{|}_{t=0}=u_{10}$ and $u_2\big{|}_{t=0}=u_{20}$. After applying the procedure from Theorem \ref{estimates-1} we conclude that 

\begin{align}
\label{u-1}
\hat{u}_1(t,\mxi)-\hat{u}_2(t,\mxi)=e^{\frac{-t|\mxi |^2}{1+|\mxi|^2}} \left( \hat{u}_{10}(\mxi)-\hat{u}_{20}(\mxi) -\int_0^t \frac{i\mxi \cdot (\hat{\mff}_1-\hat{\mff}_2)}{1+|\mxi|^2}e^{\frac{t' |\mxi |^2}{1+|\mxi|^2}}dt'\right),
\end{align}  where
$$
\hat{\mff}_1-\hat{\mff}_2={\cal F}(\mff(\mx,u_1(t,\mx))-\mff(\mx,u_2(t,\mx))).
$$ 

Let us first estimate 
\begin{align}
\label{u-3}
\left\|\frac{\mxi\cdot (\hat{\mff}_1-\hat{\mff}_2)}{1+|\mxi|^2}\right\|_{L^2(\R^d)} &\leq  \| \hat{\mff}_1-\hat{\mff}_2\|_{L^2(\R^d)} 
=  \| \mff_1-\mff_2\|_{L^2(\R^d)}  \\
&\leq  \|\pa_\lambda \mff\|_{\infty}\| \hat{u}_1-\hat{u}_2\|_{L^2(\R^d)}. \nonumber
\end{align}
Taking the $L^2(\R^d)$-norm of \eqref{u-1} we get, due to \eqref{u-3}: 
\begin{align}
\label{u-4}
&\|\hat{u}_1(t,\cdot)-\hat{u}_2(t,\cdot)\|_{L^2(\R^d)} \leq \\ & \|\hat{u}_{10}-\hat{u}_{20}\|_{L^2(\R^d)} + \|\pa_\lambda \mff \|_{\infty}  \int_0^t\|\hat{u}_1(t',\cdot)-\hat{u}_2(t',\cdot)\|_{L^2(\R^d)} e^{t'} dt'.
\nonumber
\end{align} 
Applying Gronwall's inequality, we arrive at
\begin{equation*}
\|\hat{u}_1(t,\cdot)-\hat{u}_2(t,\cdot)\|_{L^2(\R^d)} \leq \|\hat{u}_{10}-\hat{u}_{20}\|_{L^2(\R^d)} 
e^{(e^t-1)\|\pa_\lambda \mff\|_{\infty}}.
\end{equation*} 
From here, we see that  $u_{10}=u_{20}$ entails $u_1=u_2$. This concludes the proof.
\end{proof}

\section{Vanishing capillarity limit}

In this section, we inspect the vanishing capillarity limit of \eqref{vc-1}. By Theorem \ref{unique_sol_par}, under the assumptions (C1)--(C3) from Section \ref{intro}, the equation \eqref{vc-1} with initial data
\eqref{id-pert} (satisfying \eqref{cond}) possesses a unique solution $u_\eps$ in $L^2((0,T)\times \R^d)\cap C^\infty((0,T)\times \R^d)$.
As announced in Section \ref{intro}, our main result then is as follows:

\begin{theorem} Under the above assumptions, if
$(F_1,\dots, F_d)=\partial_\lambda \mff$ lies in $L^{\bar{p}'}(\R^{d+1})$
for some $\bar{p}'\in \big(1,2+\frac{4}{d}\big)$ then: 	
\label{main-theorem}
\begin{itemize}
	
	\item[(i)] If $\delta=o(\eps^2)$  and $n(\eps)$ satisfies \eqref{neps} 
	then the family $(u_\eps)$ contains a strongly convergent subsequence in $L^1_{loc}(\R^+\times \R^d)$ under the non-degeneracy condition 
	given in Definition \ref{def-non-deg} below.
	
	\item[(ii)] If $\delta={\cal O}(\eps^2)$ and $\frac{\sqrt{\delta}}{\sqrt{\eps} n(\eps)^{d/2+2}}\to 0$ as $\eps\to 0$, and the $\lambda$-derivative of the flux $\pa_\lambda \mff$ is bounded in addition to condition
	(C3), then the family $(u_\eps)$ contains a strongly convergent subsequence in $L^1_{loc}(\R^+\times \R^d)$ under the non-degeneracy condition;
	
	\item[(iii)] If $\delta=o(\eps^2)$ and $\frac{\sqrt{\delta}}{\sqrt{\eps} n(\eps)^{d/2+2}}\to 0$ as $\eps\to 0$, and if the flux $\mff\in C^1(\R^d\times \R)$, then the family $(u_\eps)$ converges strongly in $L^1_{loc}(\R^+\times \R^d)$ towards the entropy solution to \eqref{cl-1}, \eqref{id}.
\end{itemize}
\end{theorem}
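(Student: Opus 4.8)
The plan is to combine uniform a priori estimates for the family $(u_\eps)$ with a kinetic reformulation of \eqref{vc-1} to which the velocity averaging lemma of \cite{LM5} can be applied. Since Theorem \ref{unique_sol_par} guarantees that each $u_\eps$ is a genuine smooth function, I would first test \eqref{vc-1} against $u_\eps$ and integrate over $[0,t]\times\R^d$. Because $-\eps\int u_\eps\Delta u_\eps = \eps\|\nabla u_\eps\|_{L^2}^2$ and $-\delta\int u_\eps \Delta\pa_t u_\eps = \tfrac{\delta}{2}\tfrac{d}{dt}\|\nabla u_\eps\|_{L^2}^2$, this yields an energy identity controlling $\|u_\eps\|_{L^\infty([0,T];L^2)}$, the weighted gradient $\sqrt{\delta}\,\|\nabla u_\eps\|_{L^\infty([0,T];L^2)}$, and the diffusion dissipation $\eps\int_0^T\|\nabla u_\eps\|_{L^2}^2\,dt$, where the flux contribution is absorbed using (C2)--(C3) together with the growth of $\mff_\eps$. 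Testing against higher derivatives (the bootstrap already used in Theorem \ref{estimates-1}) produces the remaining bounds, in particular on $\sqrt{\delta}\,\|\nabla\pa_t u_\eps\|$, whose dependence on $n(\eps)$ is dictated by the derivatives falling on the mollifier $\omega_{n(\eps)}$ inside $\mff_\eps=K_\eps\,\mff\star\omega_{n(\eps)}$.

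Next I would pass to the kinetic formulation. Writing $h_\eps(t,\mx,\lambda)=\chi(u_\eps(t,\mx),\lambda)$ for the usual equilibrium function and differentiating, the transport part of \eqref{vc-1} becomes $\pa_t h_\eps + \pa_\lambda\mff_\eps(\mx,\lambda)\cdot\nabla_\mx h_\eps$, the term $(\ddiv_\mx\mff_\eps)(\mx,u_\eps)$ produces a source controlled in $\mathcal{M}(\R^d\times\R)$ by (C2), and the right-hand side of \eqref{vc-1} produces $\pa_\lambda$ of a sum of defect terms: a nonnegative diffusion measure of total mass $\mathcal{O}\!\big(\eps\int\|\nabla u_\eps\|^2\big)$, a signless dispersive defect carrying the factor $\delta$ (equivalently $\sqrt\delta$ after a Cauchy--Schwarz split), and a regularization error reflecting the convergence of the regularized flux to $\mff$. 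The resulting equation has the form $\pa_t h_\eps + \pa_\lambda\mff_\eps\cdot\nabla_\mx h_\eps = \pa_\lambda q_\eps + g_\eps$ with $(q_\eps)$ bounded in the space of measures and $(g_\eps)$ splitting into a convergent part and a part tending to $0$.

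The crux is to show that, under the scaling hypotheses, these defect terms lie in the function classes required by the averaging lemma of \cite{LM5}; this is where the assumption $\pa_\lambda\mff\in L^{\bar{p}'}(\R^{d+1})$, $\bar{p}'\in(1,2+4/d]$, enters, as it is exactly the integrability that makes the $p>1$ velocity averaging applicable with the (possibly discontinuous) symbol $\pa_\lambda\mff$. The main obstacle is the dispersive defect: the balance condition \eqref{neps} in case (i), and $\tfrac{\sqrt\delta}{\sqrt\eps\,n(\eps)^{d/2+2}}\to0$ in cases (ii)--(iii), are precisely calibrated so that this $\sqrt\delta$-term, whose norm degenerates like a negative power of $n(\eps)$ because of the mollified flux derivatives, becomes subordinate to the $\eps$-diffusion scale and hence does not obstruct compactness; the extra boundedness of $\pa_\lambda\mff$ assumed in (ii) is what allows the weaker restriction $\delta=\mathcal{O}(\eps^2)$ there. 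Granting this, the averaging lemma yields strong precompactness of the velocity averages, and under the non-degeneracy condition of Definition \ref{def-non-deg} strong $L^1_{loc}$ precompactness of $(u_\eps)$ itself, proving (i) and (ii).

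For (iii) the flux is continuous, so no non-degeneracy is needed: I would instead pass to the limit directly in the Kruzhkov entropy inequalities satisfied by the smooth $u_\eps$. Under the stronger constraints $\delta=o(\eps^2)$ and $\tfrac{\sqrt\delta}{\sqrt\eps\,n(\eps)^{d/2+2}}\to0$ both the dispersive defect and the regularization error vanish in the limit, so any $L^1_{loc}$ limit point $u$ of $(u_\eps)$ is an entropy solution of \eqref{cl-1}, \eqref{id}. By uniqueness of the entropy solution for a $C^1$ flux, the limit is independent of the subsequence, which upgrades subsequential convergence to convergence of the whole family.
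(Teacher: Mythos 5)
Your treatment of items (i) and (ii) follows the paper's own route essentially step by step: the a priori estimates obtained by testing \eqref{vc-1} against $u_\eps$, $\pa_{x_i}u_\eps$ and $\pa_t u_\eps$; the kinetic reformulation for $h_\eps=\mathrm{sgn}(u_\eps(t,\mx)-\lambda)$ with the defects split into an $\eps$-diffusion part bounded in measures, a $\sqrt{\delta}$-dispersive part, and a flux-regularization error; the scaling conditions \eqref{neps} (resp.\ $\sqrt{\delta}/(\sqrt{\eps}\,n(\eps)^{d/2+2})\to 0$ together with boundedness of $\pa_\lambda\mff$, which removes the $n(\eps)^{-d-1}$ loss coming from derivatives hitting the mollifier) calibrated exactly to make the dispersive defect vanish; and finally the velocity averaging result of \cite{LM5} under the non-degeneracy condition, for which the hypothesis $\pa_\lambda\mff\in L^{\bar p'}(\R^{d+1})$, $\bar p'\in(1,2+\frac4d]$, is needed. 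The paper additionally spells out how precompactness of the velocity averages yields precompactness of $(u_\eps)$ itself, via the truncation operators $T_l$ and a diagonalization/Cauchy argument; you assert this implication without proof, but it is a standard step and your overall scheme for (i)--(ii) is the paper's.

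Item (iii), however, has a genuine gap. Your argument is: ``any $L^1_{loc}$ limit point of $(u_\eps)$ is an entropy solution, and by uniqueness the whole family converges.'' This addresses identification of limits and uniqueness, but not the existence of strong $L^1_{loc}$ limit points, which is the actual crux of (iii): since no non-degeneracy is assumed there, the velocity averaging lemma gives no compactness at all (for a degenerate, e.g.\ affine-in-$\lambda$, flux the averages of $h_\eps$ need not be precompact), and the a priori bounds only yield weak $L^2$-type compactness, which does not allow passing to the limit in the nonlinear quantities $\eta(u_\eps)$ and $q(\mx,u_\eps)$ occurring in the entropy inequalities. The paper closes precisely this hole in Remark \ref{39}: one passes to the limit in \eqref{kinetic-new} at the level of the Young measure $\nu_{(t,\mx)}$ associated with (a subsequence of) $(u_\eps)$, obtaining a measure-valued entropy solution, and then invokes DiPerna's theorem \cite{Dpe} (see also \cite{KLfl2,Sze}) to conclude that the Young measure is atomic, $\nu_{(t,\mx)}=\delta(\lambda-u(t,\mx))$, with $u$ the unique entropy solution; atomicity is what simultaneously produces the strong convergence and identifies the limit. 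Without this (or an equivalent device), your claimed convergence in (iii) does not follow.
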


Below we give a complete proof of (i). Although non-trivial, the other two claims are more standard and we shall only comment on the necessary 
modifications of the argument for establishing (i). 

To proceed, let us first introduce the non-degeneracy condition. Such a condition is standard in many results using velocity averaging lemmas 
(cf., e.g., \cite{LM5, LPT, Pan, PS, TT}).

\begin{definition}
\label{def-non-deg}
The flux 
appearing in \eqref{cl-1} is called non-degenerate if for the 
function ${\bf F}=(F_1,\dots, F_d)=\partial_\lambda \mff$, where $\mff=\mff(\mx,\lambda)$ we have: for almost every $(t,\mx)\in \R_+^{d}$ and every $\mxi\in S^{d}$, where $S^d$ is the unit sphere in $\R^{d+1}$ the mapping

\begin{equation}
\label{non-deg} \lambda \mapsto  \left( \xi_0+
\sum\limits_{k=1}^d F_k(\mx,\lambda) \xi_k \right),
\end{equation} is not zero on any set of positive measure.
\end{definition} Under these assumptions, we proved the following assertion in \cite[Theorem 3.4]{LM5}.

\begin{theorem}
\label{va}
 Assume that $h_n\rightharpoonup h$  weakly in
${L}_{loc}^{s}(\R^{d+1})$ for some $s\geq 2$, where
$h_n$ are weak solutions to 

\begin{equation}
\label{lin-eq}
\begin{split}
\sum\limits_{k=1}^d\pa_{x_k}
\left(F_k(\mx,\lambda) h_n(\mx,\lambda)\right)=\pa_\lambda^\kappa
G_n(\mx,\lambda)\,.
\end{split}
\end{equation} 
Here, $\kappa\in \N_0$ and $\lambda \in \R$.  Let $\bar{p}\in (1,s)$ 
and let $\frac{1}{\bar{p}}+\frac{1}{\bar{p}'}=1$. We assume
\begin{itemize}

\item [\bf a)]
$F_k\in { L}^{\bar p'}(\R^{d+1})$ for $k=1,\dots,d, \ $

\item [\bf b)] The sequence $(G_n)$ is strongly precompact in the
space 
${ W}_{loc}^{-1,r}(\R^{d+1})$, where $r>1$ satisfies the relation 
$1+\frac{1}{s}=\frac{1}{\bar{p}}+\frac{1}{r}$.

\end{itemize} Finally, assume that the non-degeneracy condition \eqref{non-deg} is satisfied.

Then, for any $\rho\in L^2_c(\R)$, there exists a subsequence $(h_{n_k})$ of $(h_n)$ such that
\begin{equation}
\label{va_result}
\int_{\R}\rho(\lambda) h_{n_k}(\mx,\lambda)d\lambda \to  \int_{\R}\rho(\lambda) h(\mx,\lambda)d\lambda \ \ \text{ strongly in
$L^1_{loc}(\R^d)$}
\end{equation}
as $k\to\infty$.
\end{theorem}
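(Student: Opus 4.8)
The plan is to prove this velocity averaging statement in the framework of H-distributions, the $L^p$-analogue of Tartar's H-measures (equivalently G\'erard's microlocal defect measures) introduced by Antoni\'c and Mitrovi\'c, which is the natural microlocal tool once the coefficients $F_k$ are only assumed to lie in $L^{\bar p'}(\R^{d+1})$ rather than in $L^\infty$. Writing $g_n(\mx):=\int_\R \rho(\lambda)h_n(\mx,\lambda)\,d\lambda$ and $g(\mx):=\int_\R \rho(\lambda)h(\mx,\lambda)\,d\lambda$, the weak convergence $h_n\rightharpoonup h$ already yields $g_n\rightharpoonup g$ in $L^s_{loc}(\R^d)$, so the entire content of the theorem is the upgrade from weak to strong convergence along a subsequence. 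I would measure the obstruction to this upgrade by the defect object attached to $h_n-h$ and show, using the equation and the non-degeneracy hypothesis, that it vanishes; strong $L^1_{loc}$ convergence of the averages is then equivalent to the vanishing of the corresponding projection of this object.

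First I would localize: multiplying by cut-offs $\phi\in C_c^\infty(\R^d)$ and $\theta\in C_c^\infty(\R)$ with $\theta\equiv 1$ on $\operatorname{supp}\rho$, I replace $h_n$ by the compactly supported family $\phi(\mx)\theta(\lambda)(h_n-h)$, bounded in $L^s(\R^{d+1})$ and weakly null, which reduces matters to global $L^s$-sequences on $\R^{d+1}$. Passing to a subsequence, I extract the H-distribution $\mu$ associated with $(h_n-h)$, tested against a bounded dual sequence in the conjugate space prescribed by the duality relations among $s,\bar p,\bar p'$ and $r$ (in particular $\tfrac1{\bar p}+\tfrac1r=1+\tfrac1s$). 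Here $\mu$ is a distribution on $\R^{d+1}\times S^{d}$, the sphere $S^d$ carrying the frequency direction $\mxi=(\xi_0,\xi_1,\dots,\xi_d)$ dual to $(\lambda,x_1,\dots,x_d)$.

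The heart of the matter is the localization principle. After Fourier transform in $\mx$, the principal symbol of \eqref{lin-eq} is $\sum_{k=1}^d \xi_k F_k(\mx,\lambda)$, while the right-hand side $\partial_\lambda^\kappa G_n$ is, by hypothesis (b) together with the relation $1+\tfrac1s=\tfrac1{\bar p}+\tfrac1r$, strongly precompact in a negative Sobolev space and therefore makes no contribution to $\mu$. Testing the equation against the dual sequence after applying the Fourier multiplier $\mathcal A_\psi$ with symbol $\psi$ on $S^d$ and passing to the limit yields the localization relation
\[
\Big(\sum_{k=1}^d \xi_k\,F_k(\mx,\lambda)\Big)\,\mu=0 \qquad\text{in }\mathcal D'(\R^{d+1}\times S^{d}),
\]
so that $\mu$ is supported on the directions for which $\sum_{k=1}^d \xi_k F_k(\mx,\lambda)=0$. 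Since the velocity average $g_n$ only feels the part of $\mu$ supported on the slice $\{\xi_0=0\}$, where $(\xi_1,\dots,\xi_d)$ is a nonzero unit vector, I would invoke the non-degeneracy condition of Definition \ref{def-non-deg}: applied with such an $\mxi$ (for which $\xi_0+\sum_k F_k\xi_k=\sum_k F_k\xi_k$), it forbids $\lambda\mapsto\sum_k F_k(\mx,\lambda)\xi_k$ from vanishing on a $\lambda$-set of positive measure. This contradicts the localization relation unless the relevant part of $\mu$ is zero, and that vanishing is precisely the assertion $g_n\to g$ strongly in $L^1_{loc}(\R^d)$.

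The step I expect to be the main obstacle is establishing the localization identity rigorously with coefficients $F_k$ that are merely $L^{\bar p'}$ and possibly discontinuous. In the classical $L^2$ theory the commutator between a multiplier and multiplication by a \emph{smooth} symbol is compact by Tartar's first commutation lemma; here one must instead control the commutator of $\mathcal A_\psi$ with multiplication by the rough coefficient $F_k$ within the $L^{\bar p}$--$L^r$ duality, which is exactly where the $L^p$ boundedness of the relevant Fourier multipliers and the precise summability exponents in hypotheses (a)--(b) are consumed. Making the passage to the limit in the bilinear pairing legitimate --- so that only the symbol-times-$\mu$ term survives while the $G_n$-term and all commutators drop out --- together with the step converting vanishing of the defect object into strong $L^1_{loc}$ convergence (rather than the $L^2_{loc}$ convergence available in the Hilbertian case), is the technically delicate core; once it is in place the non-degeneracy argument closes the proof at once.
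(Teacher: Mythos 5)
The paper never proves Theorem \ref{va}: it is imported verbatim from \cite[Theorem 3.4]{LM5}, so there is no internal argument to compare against, only the method of that reference. Your overall strategy --- extract an H-distribution for $(h_n-h)$, derive a localization principle from \eqref{lin-eq}, use non-degeneracy to kill the defect, and convert its vanishing into strong $L^1_{loc}$ convergence of the averages via a suitably normalized dual sequence --- is indeed the method of \cite{LM5} (building on \cite{Ger, Pan, LM2}), so at the level of strategy your reconstruction is faithful.

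However, there is a concrete misstep in your setup that would make the execution fail. You place the frequency variable on the sphere $S^d$ dual to \emph{all} of $(\lambda,x_1,\dots,x_d)$, with $\xi_0$ dual to $\lambda$. Two things then break. First, the right-hand side $\pa_\lambda^\kappa G_n$ carries the symbol factor $(i\xi_0)^\kappa$, so hypothesis b) (precompactness of $G_n$ in $W^{-1,r}_{loc}$) only makes $\pa_\lambda^\kappa G_n$ precompact in $W^{-1-\kappa,r}_{loc}$; measured against the first-order left-hand side this is \emph{not} negligible in the frequency region $|\xi_0|\gtrsim|(\xi_1,\dots,\xi_d)|$ whenever $\kappa\geq 1$ --- and the present paper must apply the theorem with $\kappa=2$ (the term $\pa_\lambda^2 G_2^\eps$ in \eqref{kinetic}) --- so your assertion that this term ``makes no contribution to $\mu$'' is exactly the point that fails. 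Second, restricting $\mu$ to the slice $\{\xi_0=0\}$ is not a legitimate operation: a distribution (or even a measure) on $\R^{d+1}\times S^d$ has no canonical restriction to a codimension-one subset of the sphere, and the localization relation gives no control of $\mu$ near that slice. Both problems vanish in the organization actually used in \cite{LM5}: the Fourier multipliers act in the $\mx$-variables only (in the application, in $(t,\mx)$), $\lambda$ is a parameter, the average $\int\rho(\lambda)h_n\,d\lambda$ corresponds to testing the defect object against $\rho$, and the $\lambda$-derivatives on the right-hand side are integrated by parts onto the smooth compactly supported functions of $\lambda$ (the dual sequence depends on $\mx$ alone), which is precisely why b) suffices for every $\kappa\in\N_0$. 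Relatedly, you have misread \eqref{non-deg}: the $\xi_0$ there is dual to $t$, with coefficient $1$ coming from $\pa_t$ (the theorem is applied with space variables $(t,\mx)$ and $F_0\equiv 1$), not to the $\lambda$-frequency; non-degeneracy is then invoked for every $\mxi$ in the sphere dual to the space-time variables, with no slicing. Short of importing the anisotropic frequency scalings of \cite{LM2}, your isotropic full-frequency setup cannot be repaired.
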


Now, we proceed to the estimates sufficient to apply the mentioned functional analytic tools.

\begin{lemma}
\label{osnov}
The solution $u_{\eps}$ of \eqref{vc-1} satisfies the following a priori estimates: 
\begin{align}
\label{prva}
\|u_{\eps}(t,\,.\,)\|_{L^{2}(\R^{d})}\leq \|u_{0}^{\eps}\|_{L^{2}(\R^{d})}+\sqrt{\delta}\|\nabla u^\eps_{0}\|_{L^{2}(\R^{d})}+\sqrt{2 t} \| \ddiv \mff_\eps(\mx,\lambda) \|^{\frac{1}{2}}_{L^{1}(\R^{d}\times \R)},
\end{align}
\begin{equation}
\label{druga}
\begin{split}
\sqrt{2\eps}\|\nabla u_{\eps}\|_{L^{2}([0,T],L^{2}(\R^{d}))} \leq \|u_{0}^{\eps}\|_{L^{2}(\R^{d})} +\sqrt{\delta} & 
\|\nabla u_{0}^{\eps}\|_{L^{2}(\R^{d})}
\\ &+\sqrt{2 T} \| \ddiv \mff_\eps(\mx,\lambda) \|^{\frac{1}{2}}_{L^{1}(\R^{d}\times \R)},
\end{split}
\end{equation}
\begin{equation}
\label{treca}
\begin{split}
\sqrt{\delta} \|\nabla u_{\eps}(t,\,.\,) \|_{L^{2}(\R^{d})}\leq \|u_{0}^{\eps}\|_{L^{2}(\R^{d})}+\sqrt{\delta}  \|\nabla & u_{0}^{\eps}\|_{L^{2}(\R^{d})}\\
&+\sqrt{2 t} \| \ddiv \mff_\eps(\mx,\lambda) \|^{\frac{1}{2}}_{L^{1}(\R^{d}\times \R)}
\end{split}
\end{equation}
\begin{equation}
\label{cetvrta}
\begin{split}
\eps \delta \|\nabla \pa_{x_i} u_{\eps} & \|^2_{L^{2}([0,T],L^{2}(\R^{d}))} \leq 
2\delta \|\pa_{x_i}u_{0}^{\eps}\|^2_{L^{2}(\R^{d})}+2\delta^2 \|\nabla \pa_{x_i} u_{0}^{\eps}\|^2_{L^{2}(\R^{d})}  \\  
&+\frac{d\delta}{\eps^2} \|\pa_\lambda \mff_\eps\|^2_{\infty} ( \|u_{0}^{\eps}\|^2_{L^{2}(\R^{d})}+\delta\|\nabla u_{0}^{\eps}\|^2_{L^{2}(\R^{d})} \\
& +2 T \| \ddiv \mff_\eps(\mx,\lambda) \|_{L^{1}(\R^{d}\times \R)}) +\frac{4  T \delta }{\eps} \| \ddiv \mff^\eps(\mx, \lambda)\|^2_{L^2(\R^d, L^\infty (\R))} 
\end{split}
\end{equation}
\begin{equation}
\label{peta}
\begin{split}
 \delta \|\nabla & \pa_t u_\eps\|_{L^2([0,T],L^2(\R^d))} 
 \leq  \frac {\sqrt{\delta}  \sqrt{d}}{2\sqrt{\eps}} \|\pa_\lambda \mff_\eps\|_{\infty}  \|u^\eps_{0}\|_{L^{2}(\R^{d})}\\  
 &\hspace*{-2em}+
 \sqrt{\delta}  \left(\frac{\sqrt{\delta}\sqrt{d}}{2\sqrt{\eps}} \|\pa_\lambda \mff_\eps\|_{\infty}
 +\frac{\sqrt{\eps}}{\sqrt{2}}\right)\|\nabla u^\eps_{0}\|_{L^{2}(\R^{d})} \\
 &\hspace*{-2em} +\frac{\sqrt{\delta}  \sqrt{dT}}{\sqrt{2\eps}}  \|\pa_\lambda \mff_\eps\|_{\infty}
 \|\ddiv \mff_\eps \|_{L^1(\R^d\times \R)}^{\frac{1}{2}}+\sqrt{\delta}   \sqrt{T} 
 \|\ddiv \mff_\eps \|_{L^2(\R^d,L^\infty(\R))}.
\end{split}
\end{equation}

\end{lemma}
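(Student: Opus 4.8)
The plan is to obtain all five bounds by the energy method, testing \eqref{vc-1} (or its spatial derivative) against a suitable multiple of $u_\eps$ and exploiting the pseudo-parabolic structure: after one integration by parts the term $\delta\Delta\pa_t u_\eps$ produces the time derivative $\tfrac{\delta}{2}\tfrac{d}{dt}\norm{\nabla u_\eps}_{L^2}^2$, while $\eps\Delta u_\eps$ supplies a coercive dissipation $\eps\norm{\nabla u_\eps}_{L^2}^2$. Since $u_\eps$ is smooth by Theorem \ref{unique_sol_par}, all integrations by parts below are legitimate, and throughout the flux is treated by the chain rule $\ddiv\mff_\eps(\mx,u_\eps)=(\ddiv_\mx\mff_\eps)(\mx,u_\eps)+\pa_\lambda\mff_\eps(\mx,u_\eps)\cdot\nabla u_\eps$.

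First I would prove \eqref{prva}, \eqref{treca} and \eqref{druga} together. Multiplying \eqref{vc-1} by $u_\eps$ and integrating over $\R^d$ gives
\begin{equation*}
\frac12\frac{d}{dt}\Big(\norm{u_\eps}_{L^2(\R^d)}^2+\delta\norm{\nabla u_\eps}_{L^2(\R^d)}^2\Big)+\eps\norm{\nabla u_\eps}_{L^2(\R^d)}^2=-\int_{\R^d}u_\eps\,\ddiv\mff_\eps(\mx,u_\eps)\,d\mx.
\end{equation*}
The key observation is that the right-hand side telescopes: introducing the primitive $\lambda\mapsto\int_0^\lambda\ddiv_\mx\mff_\eps(\mx,\mu)\,d\mu$ and integrating the transport part by parts in $\mx$, the terms containing $\nabla u_\eps$ cancel and one is left with $\int_{\R^d}\int_0^{u_\eps}\ddiv_\mx\mff_\eps(\mx,\mu)\,d\mu\,d\mx$, whose absolute value is at most $\norm{\ddiv\mff_\eps}_{L^1(\R^d\times\R)}$. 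Writing $E(t)=\norm{u_\eps(t)}_{L^2}^2+\delta\norm{\nabla u_\eps(t)}_{L^2}^2$ this yields $\tfrac12 E'(t)+\eps\norm{\nabla u_\eps}_{L^2}^2\le\norm{\ddiv\mff_\eps}_{L^1}$. Discarding the nonnegative dissipation and integrating in $t$ gives $E(t)\le E(0)+2t\norm{\ddiv\mff_\eps}_{L^1}$; bounding $\norm{u_\eps(t)}_{L^2}^2$ (resp. $\delta\norm{\nabla u_\eps(t)}_{L^2}^2$) by $E(t)$ and using $\sqrt{a+b+c}\le\sqrt a+\sqrt b+\sqrt c$ produces \eqref{prva} (resp. \eqref{treca}). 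Keeping instead the dissipation, integrating on $[0,T]$ and discarding $E(T)\ge0$ gives $2\eps\int_0^T\norm{\nabla u_\eps}_{L^2}^2\le E(0)+2T\norm{\ddiv\mff_\eps}_{L^1}$, which is \eqref{druga}.

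Next I would prove \eqref{cetvrta} by repeating this computation one derivative higher. Differentiating \eqref{vc-1} in $x_i$, setting $w=\pa_{x_i}u_\eps$ and testing against $w$ gives, after one integration by parts, the same structure with right-hand side $\int_{\R^d}\pa_{x_i}^2u_\eps\,\ddiv\mff_\eps(\mx,u_\eps)\,d\mx$. Splitting $\ddiv\mff_\eps(\mx,u_\eps)$ by the chain rule, the first piece is controlled by $\norm{\ddiv\mff_\eps}_{L^2(\R^d,L^\infty(\R))}$ and the second by $\sqrt d\,\norm{\pa_\lambda\mff_\eps}_\infty\norm{\nabla u_\eps}_{L^2}$; since $\pa_{x_i}^2u_\eps$ is a component of $\nabla w$, Young's inequality absorbs both into $\eps\norm{\nabla w}_{L^2}^2$ at the cost of a factor $\eps^{-1}$. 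Integrating in time, multiplying by $\delta$, and invoking \eqref{druga} to bound $\int_0^T\norm{\nabla u_\eps}_{L^2}^2$ by $(2\eps)^{-1}$ times the right-hand side of \eqref{druga} upgrades the $\eps^{-1}$ to the factor $\delta\eps^{-2}$ appearing in \eqref{cetvrta}; the initial terms $2\delta\norm{\pa_{x_i}u_0^\eps}^2$ and $2\delta^2\norm{\nabla\pa_{x_i}u_0^\eps}^2$ come from $\delta$ times the initial energy of $w$.

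The estimate \eqref{peta} is the most delicate and I expect it to be the main obstacle. Here I would read $\nabla\pa_t u_\eps$ off the explicit Fourier representation obtained exactly as in Lemma \ref{ex-v}: in Fourier variables $\delta|\mxi|\,\widehat{\pa_t u_\eps}$ equals $-\tfrac{\delta|\mxi|}{1+\delta|\mxi|^2}\widehat{\ddiv\mff_\eps}-\tfrac{\eps\delta|\mxi|^3}{1+\delta|\mxi|^2}\hat u_\eps$. The first multiplier is bounded by $\sqrt\delta/2$, so after splitting $\ddiv\mff_\eps$ by the chain rule and using \eqref{druga} it yields the terms carrying $\norm{\pa_\lambda\mff_\eps}_\infty$ and $\norm{\ddiv\mff_\eps}_{L^2(\R^d,L^\infty)}$. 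The genuine difficulty is the second multiplier $\tfrac{\eps\delta|\mxi|^3}{1+\delta|\mxi|^2}$, which grows like $|\mxi|$ and cannot be bounded uniformly while retaining a power of $\delta$. The resolution is to substitute the solution formula $\hat u_\eps(t)=e^{-\phi t}\hat u_0^\eps-\int_0^t e^{-\phi(t-t')}(1+\delta|\mxi|^2)^{-1}\widehat{\ddiv\mff_\eps}\,dt'$, with $\phi=\eps|\mxi|^2/(1+\delta|\mxi|^2)$, and to carry out the Gaussian time integral $\int_0^T e^{-2\phi t}\,dt\le(2\phi)^{-1}$ before estimating in $\mxi$. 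The gain $(2\phi)^{-1/2}\sim(1+\delta|\mxi|^2)^{1/2}(\sqrt\eps\,|\mxi|)^{-1}$ cancels one power of $|\mxi|$, so the effective symbol acting on $\hat u_0^\eps$ is controlled by $\sqrt\eps\sqrt\delta\,|\mxi|$; being a pure gradient symbol it produces only the term $\tfrac{\sqrt\eps\sqrt\delta}{\sqrt2}\norm{\nabla u_0^\eps}_{L^2}$, with no spurious $\norm{u_0^\eps}_{L^2}$ contribution, while the remaining convolution in $t$ against $\widehat{\ddiv\mff_\eps}$ reproduces the flux terms. Collecting all pieces gives \eqref{peta}.
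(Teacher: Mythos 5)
Your treatment of \eqref{prva}--\eqref{cetvrta} coincides with the paper's own proof: the authors also multiply \eqref{vc-1} by $\eta'(u_\eps)$ with $\eta(u)=u^2$, reduce the flux contribution to the primitive $\int_0^{u_\eps}\ddiv_\mx \mff_\eps(\mx,\lambda)\,d\lambda$ (bounded by $\|\ddiv\mff_\eps\|_{L^1(\R^d\times\R)}$), and read \eqref{prva}, \eqref{druga}, \eqref{treca} off the resulting energy identity \eqref{main-eq2}; for \eqref{cetvrta} they likewise differentiate in $x_i$, test with $2\pa_{x_i}u_\eps$, absorb $\pa_{x_i}^2 u_\eps$ into the dissipation by Young's inequality at cost $\eps^{-1}$, and upgrade to the factor $\delta\eps^{-2}$ via \eqref{druga}, exactly as you propose.

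For \eqref{peta} you take a genuinely different route. The paper stays with the energy method: it multiplies \eqref{vc-1} by $\pa_t u_\eps$ and integrates; the dispersive term yields $\delta\|\nabla\pa_t u_\eps\|_{L^2}^2$ directly, the viscous term yields $-\frac{\eps}{2}\frac{d}{dt}\|\nabla u_\eps\|_{L^2}^2$, whose favorable sign at time $t$ leaves only $\frac{\eps}{2}\|\nabla u_0^\eps\|_{L^2}^2$, and the flux terms are handled once by Young's inequality before invoking \eqref{druga}. Your Fourier--Duhamel argument is sound as far as it goes: the multiplier bound $\frac{\delta|\mxi|}{1+\delta|\mxi|^2}\le\frac{\sqrt\delta}{2}$, the time integral $\int_0^T e^{-2\phi t}\,dt\le(2\phi)^{-1}$ with $\phi=\eps|\mxi|^2/(1+\delta|\mxi|^2)$, and Young's convolution inequality in $t$ are all correct, and your initial-data contribution $\frac{\sqrt{\eps\delta}}{\sqrt{2}}\|\nabla u_0^\eps\|_{L^2}$ matches the corresponding term of \eqref{peta} exactly. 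The caveat is quantitative: you meet the flux $\ddiv\mff_\eps(\mx,u_\eps)$ twice --- once through the direct multiplier and once through the Duhamel convolution --- each time paying $\frac{\sqrt\delta}{2}\|\ddiv\mff_\eps(\cdot,u_\eps)\|_{L^2([0,T]\times\R^d)}$; after the chain-rule splitting and \eqref{druga}, this makes the three terms of \eqref{peta} carrying $\|\pa_\lambda\mff_\eps\|_{\infty}$ come out a factor $\sqrt{2}$ larger than stated, a doubling the energy method avoids. So, strictly, you prove \eqref{peta} only up to absolute constants. Since Lemma \ref{osnov} is used in the sequel solely through the scaling of its right-hand sides in $\eps$, $\delta$ and $n(\eps)$ (cf.\ \eqref{divgammaeps2}, \eqref{G2-new}, \eqref{G4-new}), this is harmless for the paper's purposes, but to recover the stated constants verbatim one should test with $\pa_t u_\eps$ as the authors do.
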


\begin{proof}
Let us consider a family $(u_{\eps})$ of smooth solutions to \eqref{vc-1}.
Multiplying the equation by $\eta'(u_\eps)$ (with $\eta(u_\eps)=u_\eps^{2})$, we have
\begin{align}
\label{-2}
&\pa_t(u_\eps^{2})+2\sum_{j=1}^{d}q^\eps_{j}(\mx,u_{\eps})_{x_j}+2 \int_0^{u_{\eps}} \ddiv \mff_\eps(\mx,\lambda) d\lambda 
\\&=2\eps\sum_{j=1}^{n}(u_\eps \pa_{x_j}u_{\eps})_{x_j}-2\eps|\nabla u_\eps|^{2}+ \delta(\triangle\eta(u_\eps))_{t}-2\delta \sum_{j=1}^{n}\eta(\pa_{x_j}u_{\eps})_t-2\delta\triangle u_{\eps} \pa_t u_{\eps},
\nonumber
\end{align}
where $\bf q_\eps$ is defined by (recall that $\mff_\eps=(f_1^\eps,\dots,f_d^\eps)$) 
$$\pa_\lambda{\bf q}_\eps(\mx,\lambda)=(\pa_\lambda F^\eps_1(\mx,\lambda), \dots, \pa_\lambda F^\eps_d(\mx,\lambda))
=(\lambda \pa_\lambda f^\eps_1(\mx,\lambda),\dots, \lambda \pa_\lambda f^\eps_d(\mx,\lambda)),
$$ 
and normalized by the  condition $q^\eps_{j}(\mx,0)=0$, $j=1,...,d$.
Integrating over $\mx\in \R^d$ and  $0\leq t\leq T$, and using integration by parts we get 
\begin{equation*}
\begin{split}
\int_{\R^{d}}(|u_\eps(t) &|^{2}  -|u^\eps_{0}|^{2})dx+2 \int_0^t \int_{\R^d}\int_0^{u_{\eps}(\mx,t')} \ddiv \mff_\eps(\mx,\lambda) \,d\lambda d\mx dt'
\\&=-2\eps\int_{0}^{t}\int_{\R^{d}}|\nabla u_\eps|^{2}d\mx dt'+\delta\int_{\R^{d}}\triangle (\eta(u_\eps(t,\mx))-\eta(u^\eps_0(\mx))d\mx\\
&-2\delta \int_{\R^{d}}\sum_{j=1}^{d}(|\pa_{x_j} u_{\eps}(t,\mx)|^{2}-|\pa_{x_j} u^\eps_{0}(\mx)|^{2}(\mx))d\mx-2\delta\int_{0}^{t}\int_{\R^{d}}\triangle u_{\eps} \pa_t u_{\eps}d\mx dt'\\
&=-2\eps\int_{0}^{t}\int_{\R^{d}}|\nabla u_\eps|^{2}d\mx dt'-2\delta \int_{\R^{d}}\sum_{j=1}^{d}(|\pa_{x_j}u_{\eps}(t,\mx)|^{2}-|\pa_{x_j}u^\eps_{0}|^{2}(\mx))d\mx\\
&+\delta\sum_{j=1}^{d}\int_{\R^{d}}\int_{0}^{t}\eta(\pa_{x_j}u_{\eps})_{t}dtdx=-2\eps\int_{0}^{t}\int_{\R^{d}}|\nabla u_\eps|^{2}\,d\mx dt'\\
&-\delta \int_{\R^{d}}\sum_{j=1}^{d}|\pa_{x_j} u_{\eps}(t,\mx)|^{2}-|\pa_{x_j} u^\eps_{0}(\mx)|^{2}(\mx))d\mx
\end{split}
\end{equation*} i.e.,
\begin{equation}
\label{main-eq2}
\begin{split}
\int_{\R^{d}}|u_\eps(t)|^{2}dx+2\eps\int_{0}^{t} &\int_{\R^{d}}|\nabla u_\eps|^{2}dxdt'+\delta \sum_{j=1}^{d}\int_{\R^{d}}|\pa_{x_j}u_{\eps}|^{2}d\mx\\
&=\int_{\R^{d}}|u_{0}^\eps|^{2}dx+\delta \sum_{j=1}^{d}\int_{\R^{d}}|\pa_{x_j} u^\eps_{0}(\mx)|^{2}(x)dx\\
&\hspace*{3em}-2 \int_0^t \int_{\R^d}\int_0^{u_{\eps}(\mx,t')} \ddiv \mff_\eps(\mx,\lambda) d\lambda d\mx dt'.
\end{split}
\end{equation}
The relations \eqref{prva}, \eqref{druga} and \eqref{treca} are easy consequences of \eqref{main-eq2}.


To show \eqref{cetvrta}, let us now differentiate equation \eqref{vc-1} with respect to $x_{i}$ and then multiply it by $2\pa_{x_i}u_{\eps}.$ Since the right hand side of \eqref{vc-1} is linear in $u_\eps$, the calculation for that side will remain the same and
after integrating the equation over $[0,t]\times \R^d$ and applying integration by parts, we get
\begin{equation*}
\begin{split}
&\int_{\R^{d}}(|\pa_{x_i}u_{\eps}(t,\mx)|^{2}-|\pa_{x_i}u^\eps_{0}(\mx)|^{2})d\mx-2\sum_{k=1}^{d}\int_{0}^{t}\int_{\R^{d}} \pa_\lambda f^\eps_{k}(\mx, u_\eps) \, \pa_{x_k} u_\eps \, \pa_{x_i}^2 u_{\eps}d\mx dt\\
&-2\int_0^t \int_{\R^d} \pa_{x_i}^2 u_\eps \, \ddiv \mff^\eps(\mx, \lambda)\big{|}_{\lambda=u_\eps} d\mx dt =-2\eps\int_{0}^{t}\int_{\R^{d}}|\nabla \pa_{x_i}u_\eps|^{2}dxds\\
&-\delta \int_{\R^{d}}\sum_{j=1}^{d}(|\pa_{x_{i}x_j}u_\eps(t,\mx )|^{2}-|\pa_{x_{i}x_j}u^\eps_{0}|^{2})d\mx,
\end{split}
\end{equation*}
i.e., 

\begin{equation}
\begin{split}
&\int_{\R^{d}}|\pa_{x_i} u_{\eps}(t,\mx)|^{2} d\mx+2\eps\int_{0}^{t}\int_{\R^{d}}|\nabla \pa_{x_i}u_\eps |^{2}d\mx dt'+\delta \int_{\R^{d}} |\nabla \pa_{x_{i}} u_\eps(t,\mx)|^{2}d\mx\\
&=\int_{\R^{d}}| \pa_{x_i}u^\eps_{0}(\mx)|^{2}d\mx+\delta \int_{\R^{d}} |\nabla \pa_{x_{i}}u^\eps_{0}(\mx)|^{2}d\mx 
+2 \int_0^t \int_{\R^d} \pa_{x_i}^2 u_\eps \, \ddiv \mff^\eps (\mx, \lambda)\big{|}_{\lambda=u_\eps} d\mx dt' \\ &\hspace*{6em}+2\sum_{k=1}^{d}\int_{0}^{t}\int_{\R^{d}}\pa_\lambda f^\eps_{k}(\mx,u_\eps) \pa_{x_k}u_\eps  \pa_{x_i}^2 u_{\eps}dxdt' \\
&\leq \|\pa_{x_i}u_{0}^{\eps}\|^2_{L^{2}(\R^{d})}+\delta \|\nabla \pa_{x_i} u_{0}^{\eps}\|^2_{L^{2}(\R^{d})}   
+ \frac{\eps}{2} \int_{0}^{t}\int_{\R^{d}} |\pa_{x_{i}}^2 u_{\eps}|^2 d\mx dt'
\\
&  \hspace*{6em} +\frac{2}{\eps} \| \ddiv \mff^\eps(\mx, \lambda)\big{|}_{\lambda=u_\eps}\|^2_{L^2([0,T]\times \R^d)}
+\frac{d \|\pa_\lambda \mff_\eps\|^2_{\infty}}{\eps} \int_{0}^{t}\int_{\R^{d}}|\nabla u_{\eps}|^2 d\mx dt'\\
 & \hspace*{12em} +\eps  \int_{0}^{t}\int_{\R^{d}} |\pa_{x_{i}}^2 u_{\eps}|^2 d\mx dt'
 \\ 
& \leq  \|\pa_{x_i}u_{0}^{\eps}\|^2_{L^{2}(\R^{d})}+\delta \|\nabla \pa_{x_i} u_{0}^{\eps}\|^2_{L^{2}(\R^{d})}  +\frac{d \|\pa_\lambda \mff_\eps\|^2_{\infty}}{\eps} \|\nabla u_{\eps}\|^2_{L^2([0,T],L^2(\R^d))}  \\ 
&\hspace*{5em}+ \frac{3\eps}{2}  \int_{0}^{t}\int_{\R^{d}} |\nabla \pa_{x_{i}} u_{\eps}|^2 d\mx dt' +\frac{2}{\eps} \| \ddiv \mff^\eps(\mx, \lambda)\big{|}_{\lambda=u_\eps}\|^2_{L^2([0,T]\times \R^d)}
\label{est-d-2} 
\end{split}
\end{equation} 
where $\|\pa_\lambda \mff_\eps\|_{\infty}=\max_{1\leq j\leq n}\|\pa_{\lambda} f^\eps_{j}\|_{L^\infty(\R^{d+1})}$.  
Multiplying the above inequality by $\delta$, we have

%

\begin{equation*}
\begin{split}
&\delta \int_{\R^{d}}|\pa_{x_i}u_{\eps}(t,\mx)|^{2}d\mx +\frac{\eps \delta}{2} 
\int_{0}^{t}\int_{\R^{d}}|\nabla \pa_{x_i} u_{\eps}|^{2}d\mx dt + \delta^2 \int_{\R^{d}}|\nabla \pa_{x_{i}}u_{\eps}(t,\mx)|^{2}d\mx\\
& \leq \delta \|\pa_{x_i}u_{0}^{\eps}\|^2_{L^{2}(\R^{d})}+\delta^2 \|\nabla \pa_{x_i} u_{0}^{\eps}\|^2_{L^{2}(\R^{d})}  +
\frac{d \delta}{\eps} \|\pa_\lambda \mff_\eps\|^2_{\infty} \|\nabla u_{\eps}\|^2_{L^2([0,T],L^2(\R^d))} \\&
+\frac{2\delta }{\eps} \| \ddiv \mff^\eps(\mx, \lambda)\big{|}_{\lambda=u_\eps}\|^2_{L^2([0,T]\times \R^d)} \\
& \leq \delta \|\pa_{x_i}u_{0}^{\eps}\|^2_{L^{2}(\R^{d})}+\delta^2 \|\nabla \pa_{x_i} u_{0}^{\eps}\|^2_{L^{2}(\R^{d})} 
+\frac{2T \delta }{\eps} \| \ddiv \mff^\eps(\mx, \lambda)\|^2_{L^2(\R^d, L^\infty (\R))} \\
&+\frac{d\delta}{2\eps^2} 
\|\pa_\lambda \mff_\eps\|^2_{\infty} ( \|u_{0}^{\eps}\|^2_{L^{2}(\R^{d})}+\delta\|\nabla u_{0}^{\eps}\|^2_{L^{2}(\R^{d})}
+2 T \| \ddiv \mff_\eps(\mx,\lambda) \|_{L^{1}(\R^{d}\times \R)}),
\end{split}
\end{equation*} where in the last inequality we used equation \eqref{main-eq2}. From here, \eqref{cetvrta} immediately follows.

We next aim to prove inequality \eqref{peta}. To this end we multiply equation \eqref{vc-1} by $\pa_t u_{\eps}$ and 
and then proceed similarly to the above. Namely,

\begin{equation}
\label{main-eq11}
\begin{split}
(\pa_t u_{\eps})^{2}+\sum_{j=1}^{d} \pa_t u_{\eps} \, \pa_\lambda f_j^\eps(\mx,u_\eps) \pa_{x_j}u_{\eps} + \pa_t u_{\eps} \, &\ddiv \mff^\eps(\mx,\lambda)|_{\lambda=u_\eps} \\
&=\eps \pa_t u_{\eps}\triangle u_\eps+\delta \pa_t u_{\eps}\, \triangle \pa_t u_{\eps},
\end{split}
\end{equation} 
and we integrate equation \eqref{main-eq11} with respect to time and space.

Integration by part gives

\begin{align*}
\int_{0}^{t}\int_{\R^{d}}|\pa_t u_{\eps}|^{2}&d\mx dt'+\sum_{j=1}^{d}\int_{0}^{t}\int_{\R^{d}}\pa_t u_{\eps}\pa_\lambda f_j^\eps (\mx, u_\eps) \pa_{x_j} u_{\eps}d\mx dt' \\
&+\int_{0}^{t}\int_{\R^{d}} \pa_t u_{\eps} \, \ddiv \mff^\eps(\mx,\lambda)|_{\lambda=u_\eps} d\mx d\lambda \\
&=\eps\int_{0}^{t}\int_{\R^{d}}\triangle u_\eps \, \pa_t u_{\eps} d\mx dt'+\delta\int_{0}^{t}\int_{\R^{d}}\pa_t u_{{\eps}}\triangle \pa_t u_{\eps}d\mx dt'
\nonumber \\
&=-\sum_{j=1}^{d}\int_{0}^{t}\eps\int_{\R^{d}}\pa_{x_j} u_{\eps}\pa_{tx_j}u_{\eps} d\mx dt'-\delta\int_{0}^{t}\int_{\R^{d}}|\nabla \pa_t u_{\eps}|^{2}d\mx dt',
\nonumber
\end{align*} i.e. 

\begin{equation}
\begin{split}
\label{est-51}
&\int_{0}^{t}\int_{\R^{d}} |\pa_t u_{\eps}|^{2}d\mx dt'+\delta\int_{0}^{t}\int_{\R^{d}}|\nabla \pa_t u_{\eps}|^{2}d\mx dt'=-\eps\sum_{j=1}^{d}\int_{0}^{t}\int_{\R^{d}}\pa_{x_j} u_{\eps} \pa_{tx_j} u_{\eps}d\mx dt'\\
&- \sum_{j=1}^{d}\int_{0}^{t}\int_{\R^{d}}\left(\pa_t u_{\eps}\pa_\lambda f_j^\eps (\mx, u_\eps) \pa_{x_j} u_{\eps}
- \pa_t u_{\eps} \, \pa_{x_j} f_j^\eps(\mx,\lambda)|_{\lambda=u_\eps} \right) d\mx dt'    \\
&\leq \| \pa_\lambda \mff \|_{\infty}\sum_{j=1}^{d}\int_{0}^{t}\int_{\R^{d}}|\pa_t u_{\eps}| \, |\pa_{x_j} u_{\eps}|d\mx dt' \\
& \hspace*{1em}+ \| \pa_t u_\eps \|_{L^2([0,T]\times \R^d)} \sqrt{T} \|\ddiv \mff_\eps \|_{L^2(\R^d,L^\infty (\R))}  -\frac{\eps}{2}\sum_{j=1}^{d}\int_{0}^{t}\int_{\R^{d}}(|\pa_{x_j} u_{\eps}|^{2})_t d\mx dt'   \\
&\leq \frac{3}{4}\int_{0}^{t}\int_{\R^{d}}|\pa_t u_{\eps}|^{2} d\mx dt'+\frac{d \|\pa_\lambda \mff_\eps\|_{\infty}^{2}}{2}\int_{0}^{t}\int_{\R^{d}}|\nabla u_\eps|^{2}d\mx dt'  \\
&+T \|\ddiv \mff_\eps \|^2_{L^2(\R^d, L^\infty(\R))}
-\frac{\eps}{2}\int_{\R^{d}}|\nabla u_\eps(t)|^{2}dx+\frac{\eps}{2}\int_{\R^{d}}|\nabla u^\eps_{0}|^{2}d\mx .
 \end{split}
\end{equation}
 
By multiplying \eqref{est-51} by $\delta$
and taking into account \eqref{druga}, we conclude
\begin{align}
&\frac{\delta}{4}\int_{0}^{t}\int_{\R^{d}}|\pa_t u_{\eps}|^{2} d\mx dt'+\frac{\eps \delta}{2}\int_{\R^{d}}|\nabla u_\eps (t)|^{2}d\mx+\delta^{2} \int_{0}^{t}\int_{\R^{d}}|\nabla \pa_t u_{\eps}|^{2}d\mx dt' \\
& \nonumber \leq \frac{\delta d\| \pa_\lambda \mff_\eps\|_{\infty}^{2}}{2}\int_{0}^{t}\int_{\R^{d}}|\nabla u_\eps|^{2}dxdt'+\frac{\delta\eps}{2}\int_{\R^{d}}|\nabla u^\eps_{0}|^{2} d\mx+\delta T 
\|\ddiv \mff_\eps \|^2_{L^2(\R^d, L^\infty (\R))} \nonumber \\
&\leq \frac {\delta d\|\pa_\lambda \mff_\eps\|_{\infty}^{2}}{4\eps}
\|u^\eps_{0}\|_{L^{2}(\R^{d})}^{2}+
\left(\frac {\delta^2 d\|\pa_\lambda \mff_\eps\|_{\infty}^{2}}{4\eps}+\frac{\delta  \eps}{2}\right)\|\nabla u^\eps_{0}\|_{L^{2}(\R^{d})}^{2} \nonumber \\
& +\frac {\delta d\|\pa_\lambda \mff_\eps\|_{\infty}^{2}T}{2\eps}  \|\ddiv \mff_\eps \|_{L^1(\R^d\times \R)}+\delta T \|\ddiv \mff_\eps \|^2_{L^2(\R^d,L^\infty(\R))},
\nonumber
\end{align}which gives the inequality \eqref{peta}.
\end{proof}


We shall now derive the kinetic formulation for \eqref{vc-1} and then show that we can apply the velocity averaging result on this
form of the equation. 

\begin{lemma} {\bf (Kinetic formulation of \eqref{vc-1})}
Assume that the function $u_\eps$ satisfies \eqref{vc-1}. Then, the function $h_\eps(t,\mx,\lambda)={\rm sgn}(u_\eps(t,\mx)-\lambda)$ solves a linear PDE of the form
\begin{align}
\label{kinetic}
&\pa_t h_\eps +\ddiv (h_\eps \pa_\lambda \mff_\eps (\mx,\lambda) )-\pa_\lambda  (h_\eps \ddiv \mff_\eps(\mx,\lambda))=\pa_\lambda \ddiv (G_1^\eps)+ \pa^2_\lambda G^\eps_2,
\end{align} where 
\begin{itemize}

\item[(i)] 
$\ddiv G_1^\eps \to 0$ in $W^{-1,r}_{loc}(\R^+\times \R^d\times \R)$ for any $r\in [1,2)$; 

\item[(ii)] $(G^\eps_2)$ is bounded in the space of measures ${\cal M}_{loc}(\R^+\times \R^d \times \R)$ and thus strongly precompact in $W_{loc}^{-1,r}(\R^+\times \R^d \times \R)$ for any $r\in [1,\frac{d+2}{d+1})$.

\end{itemize}

\end{lemma}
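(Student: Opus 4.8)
The plan is to derive \eqref{kinetic} by a direct distributional computation, exploiting that $u_\eps$ is smooth (Theorem \ref{ex-un}), and then to read off $G_1^\eps$ and $G_2^\eps$ from the two perturbation terms on the right of \eqref{vc-1}, verifying (i) and (ii) from the a priori bounds of Lemma \ref{osnov} together with the scaling \eqref{neps}.

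First I would record the chain-rule identities for $h_\eps={\rm sgn}(u_\eps-\lambda)$. Since $\pa_\lambda h_\eps$ is supported on the graph $\{\lambda=u_\eps(t,\mx)\}$, one has $\pa_t h_\eps=-(\pa_t u_\eps)\,\pa_\lambda h_\eps$ and $\pa_{x_j}h_\eps=-(\pa_{x_j}u_\eps)\,\pa_\lambda h_\eps$, and for any continuous $g$ the product $\pa_\lambda h_\eps\,g(\mx,\lambda)$ equals $\pa_\lambda h_\eps\,g(\mx,u_\eps)$. Inserting these into the left-hand side of \eqref{kinetic} and expanding the divergence and $\lambda$-derivative terms by the product rule, the two contributions $h_\eps\,\pa_\lambda(\ddiv\mff_\eps)$ cancel, while collapsing $\lambda$ to $u_\eps$ recombines the remaining spatial terms into $\ddiv(\mff_\eps(\mx,u_\eps))$ by the chain rule. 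Hence the left-hand side equals $-\pa_\lambda h_\eps\,[\pa_t u_\eps+\ddiv(\mff_\eps(\mx,u_\eps))]$, which by \eqref{vc-1} equals $-\pa_\lambda h_\eps\,[\eps\Delta u_\eps+\delta\Delta\pa_t u_\eps]$.

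Next I would cast this source into the stated form. Writing $a_\eps:=\eps u_\eps+\delta\pa_t u_\eps$, so that the source is $-\pa_\lambda(h_\eps\,\Delta a_\eps)$, the identity $h_\eps\Delta a_\eps=\ddiv(h_\eps\nabla a_\eps)-\nabla h_\eps\cdot\nabla a_\eps$ together with $\nabla h_\eps\cdot\nabla a_\eps=-\pa_\lambda\big(h_\eps\,(\nabla u_\eps\cdot\nabla a_\eps)\big)$ gives exactly $\pa_\lambda\ddiv(G_1^\eps)+\pa_\lambda^2 G_2^\eps$ with $G_1^\eps=-h_\eps\nabla a_\eps$ and $G_2^\eps=-h_\eps\,(\nabla u_\eps\cdot\nabla a_\eps)$. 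For (i) it suffices to show $G_1^\eps\to 0$ in $L^r_{loc}$ for $r\in[1,2)$, since $\ddiv\colon L^r\to W^{-1,r}$ is bounded. As $|h_\eps|\le 1$, on a compact set the $\eps u_\eps$-part of $a_\eps$ gives $\|\eps h_\eps\nabla u_\eps\|_{L^r_{loc}}\lesssim\eps\|\nabla u_\eps\|_{L^2}\lesssim\sqrt\eps\to 0$ by \eqref{druga}, while the $\delta\pa_t u_\eps$-part gives $\|\delta h_\eps\nabla\pa_t u_\eps\|_{L^r_{loc}}\lesssim\delta\|\nabla\pa_t u_\eps\|_{L^2}\to 0$ by \eqref{peta} under $\delta=o(\eps^2)$ and \eqref{neps}.

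For (ii) the point is that $G_2^\eps$ is bounded in $L^1_{loc}\subset\mathcal{M}_{loc}(\R^+\times\R^d\times\R)$: integrating $|G_2^\eps|\le|\nabla u_\eps\cdot\nabla a_\eps|$ over a compact set and performing the trivial $\lambda$-integration bounds it by $\eps\|\nabla u_\eps\|_{L^2}^2+\delta\|\nabla u_\eps\|_{L^2}\|\nabla\pa_t u_\eps\|_{L^2}$. The first summand is controlled by \eqref{druga}. The second is the crux of the lemma and the main obstacle: combining $\|\nabla u_\eps\|_{L^2}\lesssim\eps^{-1/2}$ from \eqref{druga} with the bound on $\delta\|\nabla\pa_t u_\eps\|_{L^2}$ from \eqref{peta}, its leading contribution is $\lesssim\tfrac{\sqrt\delta}{\eps}\|\pa_\lambda\mff_\eps\|_\infty$, and tracking the mollification growth $\|\pa_\lambda\mff_\eps\|_\infty\lesssim n(\eps)^{-(d+2)}$ shows that condition \eqref{neps} is precisely what forces this term to zero (the remaining terms of \eqref{peta} are handled in the same way using (C2)--(C3)). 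Since the ambient dimension is $d+2$, a bounded sequence of measures is strongly precompact in $W^{-1,r}_{loc}$ for $r\in[1,\tfrac{d+2}{d+1}]$, which yields (ii). The delicate point throughout is thus feeding the second-derivative estimate \eqref{peta} back against \eqref{druga} and checking that the exponent $d+2$ in \eqref{neps} matches the blow-up of $\|\pa_\lambda\mff_\eps\|_\infty$ coming from the regularization.
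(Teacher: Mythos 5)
Your proposal is correct and follows essentially the same route as the paper: your $G_1^\eps=-h_\eps\nabla(\eps u_\eps+\delta\pa_t u_\eps)$ and $G_2^\eps=-h_\eps\,\nabla u_\eps\cdot\nabla(\eps u_\eps+\delta\pa_t u_\eps)$ are, up to constant factors, exactly the paper's $\Gamma_1^\eps+\Gamma_2^\eps$ and $\Gamma_3^\eps+\Gamma_4^\eps$, and your verification of (i)--(ii) rests on the same ingredients, namely \eqref{druga} and \eqref{peta} combined with the mollifier growth rates coming from (C2)--(C3), the initial-data bound \eqref{cond}, the scaling \eqref{neps}, and the measure-to-$W^{-1,r}_{loc}$ compactness in dimension $d+2$. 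The only differences are cosmetic: the paper derives \eqref{kinetic} by multiplying \eqref{vc-1} by $\eta'(u_\eps)$ and using arbitrariness of the entropy $\eta$, whereas you apply the distributional chain rule to ${\rm sgn}(u_\eps-\lambda)$ directly (legitimate here since $u_\eps$ is smooth), and your stated bound $\|\pa_\lambda\mff_\eps\|_{\infty}\lesssim n(\eps)^{-(d+2)}$ is cruder than the paper's $n(\eps)^{-d-1}$ in \eqref{palambdafepsinfty}, but since $n(\eps)\to 0$ it is still a valid upper bound and suffices under \eqref{neps}.
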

\begin{proof}
Before we start, notice that we require merely local convergence (see conditions (i) and (ii) of the lemma), 
which means that we can always choose $\eps$ small enough so that $K_\eps \equiv 1$ on the fixed relatively compact 
set on which we derive the estimates.

Therefore, it is enough to prove the lemma under the simplifying assumption
$$
\mff_\eps=\mff\star\omega_{n(\eps)}.
$$ 
Also, in order to avoid proliferation of symbols, we will notationally adhere to integration over the entire space $\R^d\times \R$,
with the implicit understanding we are in fact integrating over the support of the corresponding test function (see \eqref{20}).

Then, note that for any $\eta \in C^3(\R)$ such that $\eta'\in C^2_c(\R)$ 
we have (recall that $h_\eps(t,\mx,\lambda)={\rm sgn}(u_\eps(t,\mx)-\lambda)$) 
\begin{equation}
\label{kinetic-1}
\eta(u_\eps)  = \frac{1}{2} \int_\R \eta'(\lambda) h_\eps(t,\mx,\lambda)  d\lambda+\frac{\eta(-R)+\eta(R)}{2}
\end{equation} for $R>0$ such that ${\rm supp}(\eta')\subset [-R,R]$. Moreover,  for any function $f\in L^1(\R)$ 
\begin{equation}
\label{kinetic-2}
\int_{-\infty}^{u_\eps} f(\lambda) d\lambda=\frac{1}{2}\int_\R f(\lambda )  h_\eps(t,\mx,\lambda)  d\lambda +\frac{1}{2}\int_\R f(\lambda) d\lambda.
\end{equation}  To proceed, we multiply equation \eqref{vc-1} by $\eta'(u_\eps)$. We get 

\begin{align}
\label{kinetic-new}
&\partial_{t} \eta(u^{\eps})+\ddiv \int_{-\infty}^{u_\eps} \eta'(\lambda) \pa_\lambda \mff_\eps(\mx,\lambda) d\lambda+\int_{-\infty}^{u_\eps} \eta''(\lambda)\ddiv \mff_\eps(\mx,\lambda) d\lambda\\&
=\sum_{j=1}^{d}\partial_{x_j}\left(\eps\eta'(u_{\eps})\pa_{x_j}u_{\eps}+\delta\eta'(u_{\eps})\pa_{t,x_{j}}u_{\eps}\right) \nonumber \\
&-\eps \eta''(u_{\eps})\sum_{j=1}^{d}|\pa_{{x_j}}u_\eps|^{2}-\delta\eta''(u_\eps)\sum_{j=1}^{d}\pa_{x_{j}}u_{\eps}\pa_{t,x_{j}}u_{\eps}.
\nonumber 
\end{align} Now, we apply \eqref{kinetic-1} and \eqref{kinetic-2} and rewrite the latter equation in the form 

\begin{align*}
&\frac{1}{2}
\partial_{t} \int_{\R} \eta'(\lambda) h_\eps d\lambda +\ddiv \int_\R  \eta'(\lambda) 
\pa_\lambda \mff_\eps(\mx,\lambda) h_\eps d\lambda+\int_\R \eta''(\lambda)\ddiv \mff_\eps(\mx,\lambda) h_\eps d\lambda \\&
=\sum_{j=1}^{d}\partial_{x_j}\left(\eps \int_\R \eta''(\lambda) h_\eps d\lambda \pa_{x_j}u_{\eps}+\delta\int_\R \eta''(\lambda) h_\eps d\lambda \pa_{t,x_{j}}u_{\eps}\right)\\
&-\eps \int_\R \eta'''(\lambda) h_\eps d\lambda\sum_{j=1}^{d}|\pa_{{x_j}}u_\eps|^{2}-\delta\int_\R \eta'''(\lambda) h_\eps d\lambda\sum_{j=1}^{d}\pa_{x_{j}}u_{\eps}\pa_{t,x_{j}}u_{\eps}.
\end{align*} 
If we rewrite the latter in the variational formulation, we get for every test function $\phi \in C_c^2(\R^+\times \R^d)$
\begin{align}
\label{20}
&-\int_{\R^+\times \R^d \times \R} \left( \frac{1}{2}\partial_{t}\phi (t,\mx)  \eta'(\lambda) +  \nabla \phi \eta'(\lambda) \cdot  \pa_\lambda \mff_\eps(\mx,\lambda)\right) h_\eps dt d\mx d\lambda\\
&+\int_{\R^+\times \R^d \times \R}  \eta''(\lambda) \, \phi \, \ddiv \mff_\eps (\mx ,\lambda) h_\eps dt d\mx  d\lambda
\nonumber
\\&
=-\sum_{j=1}^{d}\int_{\R+\times \R^d \times \R}h_\eps \partial_{x_j}\phi\eta''(\lambda)  \left(\eps  \pa_{x_j}u_{\eps}+\delta \pa_{t,x_{j}}u_{\eps}\right) dt d\mx d\lambda\nonumber\\
&- \int_{\R^+\times \R^d \times \R}\phi(t,\mx) \eta'''(\lambda) h_\eps \left( \eps \sum_{j=1}^{d}|\pa_{{x_j}}u_\eps|^{2}-\delta \sum_{j=1}^{d}\pa_{x_{j}}u_{\eps}\pa_{t,x_{j}}u_{\eps} \right) dt d\mx d\lambda\nonumber\\
&=:-\int_{\R^+\times \R^d \times \R} \left( \Gamma_{1}^{\eps}+\Gamma_{2}^{\eps}\right) \cdot \nabla \phi(t,\mx) \eta''(\lambda)dt d\mx d\lambda\nonumber\\
& \hspace*{8em} -\int_{\R^+\times \R^d \times \R} \left(\Gamma_{3}^{\eps}+\Gamma_{4}^{\eps}\right) \phi(t,\mx) \eta'''(\lambda) dt d\mx d\lambda.\nonumber
\end{align} 
Now, since $\eta'\in C^2_c(\R)$ is arbitrary, if we put 
\begin{equation}
\label{G}
G_1^\eps =\Gamma_{1}^{\eps}+\Gamma_{2}^{\eps} \ \ {\rm and} \ \ G_2^\eps=\Gamma_{3}^{\eps}+\Gamma_{4}^{\eps}
\end{equation} 
we arrive at \eqref{kinetic}. 



Next, we prove (i) and (ii).  To begin with, we show that $\ddiv \Gamma_{1}^{\eps}\rightarrow 0\enspace\mbox{as} \enspace \eps\rightarrow 0^{+}$ in $H^{-1,r}_{loc}(\R^+\times \R^d\times \R)$, $r\leq 2$.

We have for any $\phi \in C_c^{1}(\R^+\times \R^d \times \R)$ supported in the hypercube with side length $M$ centered at zero:

\begin{equation}
\begin{split}
|\left<\ddiv \Gamma_{1}^{\eps},\phi \right>|&= \left|\sum_{j=1}^{d}\int_{0}^{T}\int_{\R^{d+1}}\eps h_\eps u_{x_j}^{\eps}\phi_{x_j}(t,\mx,
\lambda)d\mx d\lambda dt\right| \\
&\leq \eps M^{1/2} \| \phi\|_{H^1(\R^+\times \R^d \times \R)} \, \| \nabla u^\eps \|_{L^2([0,T]\times \R^d)}.
\end{split}
\end{equation}
%

To proceed further, we use \eqref{druga}. For estimating the term $\|\ddiv (\mff_\varepsilon)\|_{L^1(\R^{d+1})}$ appearing in \eqref{druga} 
we use approximations of the form $\mff_\eps :=\mff \star \omega_{n(\eps )}$  as in \eqref{mollifying} 
with $n(\eps)$ satisfying \eqref{neps}. 
Since $\delta=o(\eps^2)$, the $\mff_\eps$ will converge to $\mff $ as $\eps \to 0$ in $L^r_{loc}(\R^d\times \R)$ 
for any $1\le r \le p$. Using (C2) and (C3), we get

\begin{equation}\label{divfepsL1}
 \begin{split}
\|\ddiv (\mff_\varepsilon)\|_{L^1(\R^{d+1})}&= \| |\ddiv (\mff)|\star \omega_{n(\eps)}\|_{L^1(\R^{d+1})} \\
&\leq 
\left\| \frac{\mu(\mx)}{1+|\lambda|^{1+\beta}}\star \omega_{n(\eps)}(\mx,\lambda) \right\|_{L^1(\R^{d+1})} 
\\&\leq C \mu(\R^{d}) \| \omega_{n(\eps)} \|_{L^1(\R^{d+1})} \leq C,
\end{split} 
\end{equation}
where here and below, $C$ denotes a generic constant that may alter from line to line. Thus (keeping in mind \eqref{druga} 
and \eqref{cond}), from \eqref{divfepsL1} we conclude 
\begin{align}
|\left<\ddiv \Gamma_{1}^{\eps},\phi  \right>| \leq \sqrt{\eps} (n(\eps)^{-1}\sqrt{\delta}+1)C   
\|\phi\|_{H^{1}(\R^+\times \R^d \times \R})\rightarrow 0^{+},\enspace as\enspace \eps\rightarrow 0^{+}
\end{align} 
by \eqref{neps}.

For the next estimate we have
\begin{equation}\label{divgammaeps2}
\begin{split}
|\left<\ddiv \Gamma_{2}^{\eps},\phi \right>|\leq 2 M^{1/2}
\|\phi\|_{H_{0}^{1}([0,T]\times\R^{d+1})} \delta \|\nabla u^{\eps}_{t}\|_{L^{2}([0,T],L^{2}(\R^{d}))}.
\end{split}
\end{equation}

Comparing with \eqref{peta}, we see that it remains to estimate the terms  $\|\pa_\lambda \mff_\eps\|_{\infty} $ 
and $\|\ddiv (\mff_\varepsilon)\|_{L^2(\R^{d},L^\infty (\R))}$. For the first one we have
\begin{equation}\label{palambdafepsinfty}
\begin{split}
\|\pa_\lambda &\mff_\eps \|_{\infty} = \sup_{i} \| (\pa_\lambda f_i)\star \omega_{n(\eps)}\|_{L^\infty(\R^{d+1})}\\ 
&= \sup_{i} \| (\pa_\lambda f_i)\star \omega_{n(\eps)}\|_{L^\infty(\R^{d}, L^\infty(\R))} \\ &\leq \sup_{i} \| \pa_\lambda f_i\|_{L^1(\R^{d}, L^\infty(\R))}  \| \omega_{n(\eps)} \|_{L^\infty(\R^{d+1})}  \leq C n(\eps)^{-d-1} 
\end{split}
\end{equation} 
by (C3). For the second one we obtain from (C2) and the Young inequality
\begin{equation}\label{divfepsL2}
\begin{split}
\|\ddiv (\mff_\varepsilon)\|_{L^2(\R^{d},L^\infty (\R))}&\leq 
\| \frac{\mu(\mx)}{1+|\lambda|^{1+\beta}}\star \omega_{n(\eps)}(\mx,\lambda) \|_{L^2(\R^{d},L^\infty(\R))}\\
& =  \| \mu\star \omega_{n(\eps)}^{(1)}(\mx)\frac{1}{1+|\lambda|^{1+\beta}}\star \omega^{(2)}_{n(\eps)}(\lambda) \|_{L^2(\R^{d},L^\infty(\R))} \\
&\leq C n(\eps)^{-1}\|\mu \star \omega^{(1)}_{n(\eps)} \|_{L^2(\R^d)}  
\\ &\leq C n(\eps)^{-1}\mu(\R^d) \| \omega^{(1)}_{n(\eps)} \|_{L^2(\R^d)} \leq C 
n(\eps)^{-d/2-1}.
\end{split}
\end{equation}
Combining \eqref{divgammaeps2}, \eqref{palambdafepsinfty}, \eqref{peta}, \eqref{divfepsL1} and \eqref{cond} we see that 
\begin{align}
\nonumber
|\langle\ddiv & \Gamma_{2}^{\eps},\phi \rangle| 
\leq \\ 
& C \delta^{1/2} \left(\frac{1}{\sqrt{\eps} n(\eps)^{d+1}}
+ \Big(\frac{\sqrt{\delta}}{\sqrt{\eps}n(\eps)^{d+1}}+\sqrt{\eps}\Big)
\frac{1}{n(\eps)} + \frac{1}{\sqrt{\eps}n(\eps)^{d+1}}
+\frac{1}{n(\eps)^{d/2 + 1}} \right) \nonumber \\
&\cdot\|\phi\|_{H_{0}^{1}([0,T]\times \R^{d+1})}
\rightarrow 0^{+},\enspace as\enspace \eps\rightarrow 0^{+}.
\label{G2-new}
\end{align}  
by \eqref{neps}.
This implies claim (i) of the current lemma.
Next, using \eqref{druga}, \eqref{peta}, \eqref{cond}, \eqref{divfepsL1} and \eqref{palambdafepsinfty} we estimate 
\begin{equation}
\label{G4-new}
\begin{split}
|\left<\Gamma_{4}^{\eps},\phi \right>|& \leq \delta\left|\sum_{j=1}^{d}\int_{0}^{T}\int_{[-M,M]^{d+1}}|u_{x_j}^{\eps}| \, |u_{t,x_{j}}^{\eps}| \, |\phi(t,\mx,\lambda)|d\mx dt d\lambda \right|\\
&\leq M \delta \|\phi\|_{\infty}\sum_{j=1}^{d}\int_{0}^{T}\int_{\R^{d}}|u_{x_j}^{\eps}||u_{t,x_{j}}^{\eps}|dx dt\\
&\leq M \delta \|\phi\|_{\infty} \|\nabla u^{\eps}\|_{L^{2}([0,T]\times \R^{d})} \|\nabla u_{t}^{\eps}\|_{L^{2}([0,T]\times \R^{d})}\\
& \le C \sqrt{\delta} \|\phi\|_\infty  \Big(\frac{1}{\sqrt{\eps}} +
\frac{{\sqrt{\delta}}}{\sqrt{\eps} n(\eps)}\Big) \Big[\frac{1}{\sqrt{\eps}n(\eps)^{d+1}}+ \\
& + \Big(\frac{\sqrt{\delta}}{\sqrt{\eps}n(\eps)^{d+1}}
+\sqrt{\eps}\Big)\frac{1}{n(\eps)} + 
\frac{1}{\sqrt{\eps}n(\eps)^{d+1}} +  \frac{1}{n(\eps)^{d/2+1}}
\Big],  
\end{split}
\end{equation}
which goes to $0$ as $\eps\to 0+$ by \eqref{neps}.

Finally, it remains to estimate $\Gamma_3^\eps$. Using \eqref{druga}, \eqref{cond} and \eqref{divfepsL1} we obtain 
\begin{equation}
\begin{split}
|\left<\Gamma_{3}^{\eps},\phi \right>|& \leq \eps  \left|\sum_{j=1}^{d}\int_{0}^{T}\int_{[-M,M]^{d+1}} |u_{x_j}^{\eps}(t,\mx)|^2 \phi(t,\mx,\lambda) \, d\mx dt d\lambda\right|\\
&\leq  \eps M \|\phi\|_{\infty} \|\nabla u^{\eps}\|^2_{L^{2}([0,T]\times \R^{d})} \\
&\leq C \|\phi\|_{\infty} (1+\frac{\delta}{n(\eps)^2}) \leq  C \|\phi\|_{\infty}.
\end{split}
\end{equation} 

From the estimates given above, we see that $\ddiv\Gamma_1^\eps$ and $\ddiv\Gamma_2^\eps$ converge strongly to zero in $H^{-1}_{loc}(\R^+\times \R^d \times \R)$ implying that they converge strongly 
in $W^{-1,r}_{loc}(\R^+\times \R^d \times \R)$, $r\in [1,2)$, as well.

As for  $\Gamma_3^\eps$ and $\Gamma_4^\eps$, they are locally bounded in the space of Radon measures 
(which we denote it by ${\cal M}(\R^+\times \R^d \times \R)$) and thus, they are precompact in 
$W^{-1,r}_{loc}(\R^+\times \R^d \times \R)$, $r\in \big[1,\frac{d+2}{d+1}\big)$ 
\cite[Th.\ 1.3.2]{evans}. 
This implies condition (ii) from the Lemma.


\end{proof}

According to the previous theorem, we see that we can apply the velocity averaging lemma Theorem \ref{va}. Indeed, we have:

\begin{lemma}
Any solution $h_\eps$ of \eqref{kinetic} satisfies 
\begin{equation}
\label{for-va}
\pa_t h_\eps +\ddiv (h_\eps \pa_\lambda \mff (\mx,\lambda) )=\pa_\lambda \ddiv (G_1^\eps)+ \pa^2_\lambda G^\eps_2 + \ddiv(G^\eps_3)+\pa_\lambda  (h_\eps \ddiv \mff_\eps(\mx,\lambda)),
\end{equation} where $(G_j^\eps)$, $j=1,3$, are strongly precompact in $L_{loc}^r(\R^+\times \R^d \times \R)$ while $(G^\eps_2)$ and $(h_\eps \ddiv \mff_\eps(\mx,\lambda))$ are strongly precompact in $W^{-1,r}_{loc}(\R^+\times \R^d \times \R)$, $r\in \big[1,\frac{d+2}{d+1}\big)$.
\end{lemma}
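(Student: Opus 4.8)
The plan is to read \eqref{for-va} off the kinetic equation \eqref{kinetic} by a single algebraic rearrangement, and then to verify the four stated compactness properties one at a time. Keeping the local normalization $K_\eps\equiv 1$ as in the preceding proof (so $\mff_\eps=\mff\star\omega_{n(\eps)}$), I would split the flux term as $\ddiv(h_\eps\pa_\lambda\mff_\eps)=\ddiv(h_\eps\pa_\lambda\mff)-\ddiv(G_3^\eps)$ with $G_3^\eps:=h_\eps(\pa_\lambda\mff-\pa_\lambda\mff_\eps)$, and transfer the term $\pa_\lambda(h_\eps\ddiv\mff_\eps)$ from the left- to the right-hand side of \eqref{kinetic}. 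With $G_1^\eps=\Gamma_1^\eps+\Gamma_2^\eps$ and $G_2^\eps=\Gamma_3^\eps+\Gamma_4^\eps$ from \eqref{G}, this reproduces \eqref{for-va} identically, so the identity itself costs nothing and all the work lies in the precompactness of $G_1^\eps,G_3^\eps$ in $L^r_{loc}$ and of $G_2^\eps,\,h_\eps\ddiv\mff_\eps$ in $W^{-1,r}_{loc}$.

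For $G_1^\eps$ I would upgrade the estimates of the kinetic-formulation lemma, which only controlled $\ddiv G_1^\eps$, to control of $G_1^\eps$ itself in $L^2_{loc}$. Since $|h_\eps|\equiv 1$, on a bounded $\lambda$-slab one has $|\Gamma_1^\eps|\le 2\eps|\nabla u_\eps|$ and $|\Gamma_2^\eps|\le 2\delta|\nabla\pa_t u_\eps|$, so I would bound $\|\Gamma_1^\eps\|_{L^2_{loc}}\le C\eps\|\nabla u_\eps\|_{L^2}$ and $\|\Gamma_2^\eps\|_{L^2_{loc}}\le C\delta\|\nabla\pa_t u_\eps\|_{L^2}$ and feed in the a priori estimates \eqref{druga} and \eqref{peta} together with \eqref{cond}, \eqref{palambdafepsinfty} and \eqref{divfepsL1}. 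The first gives $\eps\|\nabla u_\eps\|_{L^2}\le C\sqrt\eps(1+\sqrt\delta/n(\eps))$, while $\delta\|\nabla\pa_t u_\eps\|_{L^2}$ is dominated by exactly the combination of $\eps,\delta,n(\eps)$ already shown to vanish in \eqref{G2-new}. By $\delta=o(\eps^2)$ and the balance \eqref{neps}, both go to $0$, so $G_1^\eps\to 0$ in $L^2_{loc}$ and is in particular strongly precompact in every $L^r_{loc}$, $r\le 2$.

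For $G_3^\eps$ I would use only (C1): since $\pa_\lambda\mff\in L^p(\R^d\times\R)$ and $\pa_\lambda\mff_\eps=(\pa_\lambda\mff)\star\omega_{n(\eps)}$ is its mollification, $\pa_\lambda\mff_\eps\to\pa_\lambda\mff$ strongly in $L^p$; because $|h_\eps|\le 1$ and $\pa_\lambda\mff-\pa_\lambda\mff_\eps$ is $t$-independent, for any compact $K$ one gets $\|G_3^\eps\|_{L^r([0,T]\times K)}\le T^{1/r}\|\pa_\lambda\mff-\pa_\lambda\mff_\eps\|_{L^r(K)}\to 0$ for $r\le p$, hence strong precompactness. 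The precompactness of $G_2^\eps=\Gamma_3^\eps+\Gamma_4^\eps$ in $W^{-1,r}_{loc}$, $r\in[1,\frac{d+2}{d+1}]$, is already claim (ii) of the kinetic-formulation lemma and may be quoted verbatim. For $h_\eps\ddiv\mff_\eps$ I would first bound it in the Radon measures: since $|h_\eps|\le 1$ and $\|\ddiv\mff_\eps\|_{L^1(\R^d\times\R)}\le C$ by \eqref{divfepsL1}, testing against $\phi\in C_c$ gives $|\langle h_\eps\ddiv\mff_\eps,\phi\rangle|\le CT\|\phi\|_\infty$, so $(h_\eps\ddiv\mff_\eps)$ is bounded in ${\cal M}_{loc}(\R^+\times\R^d\times\R)$; the compact embedding ${\cal M}_{loc}\hookrightarrow W^{-1,r}_{loc}$ in ambient dimension $d+2$ for $r\in[1,\frac{d+2}{d+1}]$ (\cite[Th.\ 1.3.2]{evans}) then finishes this term.

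The \emph{hard part} is the $L^2_{loc}$-smallness of $\Gamma_2^\eps=2\delta\,h_\eps\nabla\pa_t u_\eps$: this is the only place where the full mixed-derivative estimate \eqref{peta} and the precise relation \eqref{neps} (together with $\delta=o(\eps^2)$) are genuinely consumed, whereas the other three terms are comparatively soft. I would take care that the bookkeeping of the powers of $n(\eps),\eps,\delta$ there matches \eqref{G2-new} exactly, since a scaling slip at that point would be fatal to the whole argument.
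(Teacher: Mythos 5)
Your proposal is correct and takes essentially the same route as the paper: the identical rearrangement of \eqref{kinetic} with $G_3^\eps=h_\eps(\pa_\lambda \mff-\pa_\lambda \mff_\eps)$, mollifier convergence from (C1) for $G_3^\eps$, the measure bound from (C2)/\eqref{divfepsL1} plus the Evans embedding for $h_\eps \ddiv \mff_\eps$, and the already-established properties of $G_1^\eps$, $G_2^\eps$. The only difference is that you explicitly re-read the preceding lemma's estimates as $L^2_{loc}$ bounds on $\Gamma_1^\eps$ and $\Gamma_2^\eps$ (rather than bounds on $\langle \ddiv \Gamma_i^\eps,\phi\rangle$), which is a correct and welcome filling-in of a step the paper leaves implicit.
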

\begin{proof}
It is enough to rewrite \eqref{kinetic} in the form
\begin{equation*}
\begin{split}
& \frac{1}{2}\pa_t h_\eps +\ddiv (h_\eps \pa_\lambda \mff (\mx,\lambda) )
=\pa_\lambda \ddiv (G_1^\eps)+ \pa^2_\lambda G^\eps_2  \\& 
\hspace*{5em}
+\ddiv (h_\eps (\pa_\lambda \mff (\mx,\lambda)-\pa_\lambda \mff_\eps (\mx,\lambda)))+\pa_\lambda  (h_\eps \ddiv \mff_\eps(\mx,\lambda)),
\end{split}
\end{equation*} and denote
$$
G_3^\eps=h_\eps (\pa_\lambda \mff (\mx,\lambda)-\pa_\lambda \mff_\eps (\mx,\lambda)).
$$ Clearly, since $\mff_\eps=K_\eps\cdot\mff \star \omega_{n(\eps)}$, we have $G_3^\eps \to 0$ as $\eps\to 0$ in $L_{loc}^p(\R^d\times \R)$ 
(were $p$ is given in (C1)). 
On the other hand, according to condition (C2), we see that $(h_\eps \ddiv \mff_\eps(\mx,\lambda)))$ is bounded in the space of measures and thus strongly precompact in $ W^{-1,r}_{loc}(\R^+\times \R^d \times \R)$.
This concludes the proof.
\end{proof} Now, we are ready to use Theorem \ref{va}.  
In fact, we may choose $r\in \big(1,\frac{d+2}{d-1}\big)$ and $s\ge 2$
in such a way that 
\[
1+\frac{1}{s} - \frac{1}{r} <1
\]
and then set $\bar p := (1+\frac{1}{s} - \frac{1}{r})^{-1}$. Then fixing $\bar{p}'$ such that $\frac{1}{\bar p} + \frac{1}{\bar p'}=1$,
the assumptions of Theorem \ref{main-theorem} precisely allow to apply Theorem \ref{va}
(noting that $(h_\eps)$ is bounded in any $L^s_{loc}$, hence contains a weakly convergent
subsequence).
More precisely, one easily checks that any $\bar p\in \big(\frac{2d+4}{d+4},\infty\big)$, and 
thereby any $\bar p' \in \big(1,2+\frac{4}{d}\big)$ can be obtained in this way.
This is the reason for the specific assumption in Theorem \ref{main-theorem}.

Consequently, for any sequence $\eps_n\searrow 0$ and any
$\rho \in L^2(\R)$, setting $h_n:=h_{\eps_n}$ and $u_n:=u_{\eps_n}$, 
the sequence $\left(\int_\R \rho(\lambda) h_n(t,\mx,\lambda) d\lambda \right) $  
is strongly precompact in $L^1_{loc}(\R^+\times \R^d)$. As we shall see in the next theorem, this implies strong 
convergence of the sequence $(u_n)$. We first need the following auxiliary result.

\begin{lemma}
\label{lconv-1}
Assume that the sequence $(u_n)$ is bounded in $L^p(\Omega)$, $\Omega \Subset \R^d$, $d\in \N$, for some $p\geq 1$. 
Define
$$
\Omega_n^l=\{ \mx\in\Omega:\, |u_{n}(\mx)| > l \}.
$$ Then
\begin{equation}
\label{conv1}
\lim\limits_{l\to \infty} \sup\limits_{n\in \N} {\rm
meas}(\Omega_n^l) =0.
\end{equation}

\end{lemma}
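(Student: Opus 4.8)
The plan is to read off the claim directly from the Chebyshev (Markov) inequality: the hypothesis gives a bound on $\|u_n\|_{L^p(\Omega)}$ that is \emph{uniform} in $n$, and Chebyshev converts such a bound into a decay estimate for the measures of the super-level sets $\Omega_n^l$ with a rate independent of $n$. So the only thing one really has to check is that the constant produced is $n$-independent; there is no genuine obstacle beyond bookkeeping.

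Concretely, by the boundedness assumption there is a constant $C>0$ with $\|u_n\|^p_{L^p(\Omega)}\le C$ for all $n\in\N$. Fix $l>0$. On $\Omega_n^l$ one has $|u_n(\mx)|^p> l^p$ by definition, so I would estimate
\[
l^{p}\,{\rm meas}(\Omega_n^l)\le \int_{\Omega_n^l}|u_n(\mx)|^{p}\,d\mx
\le \int_{\Omega}|u_n(\mx)|^{p}\,d\mx=\|u_n\|^{p}_{L^p(\Omega)}\le C .
\]
Dividing by $l^p$ yields ${\rm meas}(\Omega_n^l)\le C/l^{p}$, and the crucial point is that the right-hand side does not depend on $n$.

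Taking the supremum over $n\in\N$ therefore gives $\sup_{n\in\N}{\rm meas}(\Omega_n^l)\le C/l^{p}$, and letting $l\to\infty$ the right-hand side tends to $0$ because $p\ge1>0$. This is exactly \eqref{conv1}. As noted, the argument is entirely elementary once the uniform $L^p$ bound is in place; the only feature worth emphasizing is that the hypothesis ``$(u_n)$ bounded in $L^p(\Omega)$'' is precisely what guarantees the $n$-uniformity that turns a pointwise Chebyshev estimate into the required uniform smallness of measures.
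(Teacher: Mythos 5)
Your proof is correct and takes essentially the same approach as the paper's: a Chebyshev/Markov estimate on the super-level sets, with the uniform $L^p$ bound supplying an $n$-independent constant. The only cosmetic difference is that you apply Chebyshev directly with exponent $p$ (obtaining the rate $C/l^p$ and, incidentally, never needing $\Omega$ to be bounded), whereas the paper first passes to a uniform $L^1$ bound, using $\Omega \Subset \R^d$, and then applies the $L^1$ Chebyshev inequality to get the rate $C/l$.
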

\begin{proof}
Since $(u_{n})$ is bounded in $L^p(\Omega)$ and thus in $L^1(\Omega)$ as well (since $\Omega \Subset \R^m$), we have
\begin{align*}
&\sup\limits_{n\in \N} \int_{\Omega }|u_{n}(\mx)| d\mx \geq
\sup\limits_{n\in \N} \int_{\Omega_n^l} l d\mx\, \implies \frac{1}{l}\sup\limits_{k\in \N} \int_{\Omega }|u_{n}(\mx)| d\mx \geq
\sup\limits_{n\in \N} {\rm meas}(\Omega_n^l),
\end{align*} implying \eqref{conv1} after letting $l\to \infty$ here.
\end{proof}
With the above notations we finally arrive at:
\begin{theorem}
Under the non-degeneracy conditions \eqref{non-deg}, the sequence $(u_n=u_{\eps_n})$ of solutions to \eqref{vc-1}, \eqref{id-pert} strongly converges along a subsequence in $L^1_{loc}(\R^+\times \R^d)$.
\end{theorem}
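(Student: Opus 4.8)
The plan is to recover strong convergence of $(u_n)$ from the strong precompactness of its velocity averages, which has already been secured above. The crucial observation is a pointwise identity relating truncations of $u_\eps$ to velocity averages of $h_\eps$. Writing $T_l(s)=\max(-l,\min(l,s))$ for the truncation at level $l>0$, a direct computation from $h_\eps(t,\mx,\lambda)=\mathrm{sgn}(u_\eps(t,\mx)-\lambda)$ (splitting the $\lambda$-integral at $\lambda=u_\eps$) yields
\begin{equation*}
\int_\R \tfrac{1}{2}\chi_{[-l,l]}(\lambda)\, h_\eps(t,\mx,\lambda)\, d\lambda = T_l(u_\eps(t,\mx)).
\end{equation*}
Since $\tfrac{1}{2}\chi_{[-l,l]}\in L^2(\R)$, the strong precompactness of velocity averages established above applies with this choice of weight, so for each fixed $l$ the sequence $\big(T_l(u_n)\big)_n$ is strongly precompact in $L^1_{loc}(\R^+\times\R^d)$.

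First I would fix a relatively compact set $\Omega\Subset\R^+\times\R^d$ and verify that $(u_n)$ is bounded in $L^2(\Omega)$, uniformly in $n$. This follows from estimate \eqref{prva}: condition \eqref{cond} gives $\|u_0^\eps\|_{L^2}\le C_0$ and $\sqrt{\delta}\,\|\nabla u_0^\eps\|_{L^2}\le \sqrt{\delta}\,C_0/n(\eps)$, which tends to $0$ since $\sqrt{\delta}=o(\eps\, n(\eps)^{d+2})$ by \eqref{neps}, while $\|\ddiv\mff_\eps\|_{L^1}$ is bounded by \eqref{divfepsL1}; hence $\|u_\eps(t,\cdot)\|_{L^2(\R^d)}$ is bounded uniformly for $t$ in a compact interval. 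Next I would extract, by a diagonal argument over $l\in\N$, a single subsequence — still denoted $(u_n)$ — along which $T_l(u_n)$ converges in $L^1(\Omega)$ for every $l\in\N$ simultaneously.

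The final step upgrades convergence of the truncations to convergence of $(u_n)$ itself through
\begin{equation*}
\|u_n-u_m\|_{L^1(\Omega)}\le \|u_n-T_l(u_n)\|_{L^1(\Omega)}+\|T_l(u_n)-T_l(u_m)\|_{L^1(\Omega)}+\|T_l(u_m)-u_m\|_{L^1(\Omega)}.
\end{equation*}
For fixed $l$ the middle term vanishes as $n,m\to\infty$ by the diagonal convergence. For the outer terms, on $\Omega_n^l=\{|u_n|>l\}$ one has $|u_n-T_l(u_n)|=|u_n|-l\le|u_n|$, so H\"older's inequality gives $\|u_n-T_l(u_n)\|_{L^1(\Omega)}\le\|u_n\|_{L^2(\Omega)}\,\mathrm{meas}(\Omega_n^l)^{1/2}$; by Lemma \ref{lconv-1} together with the uniform $L^2$ bound, this tends to $0$ as $l\to\infty$, uniformly in $n$. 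Choosing $l$ large to make the outer terms small and then $n,m$ large to kill the middle term shows that $(u_n)$ is Cauchy in $L^1(\Omega)$, hence convergent. The main obstacle is confirming that the uniform-in-$n$ boundedness in $L^2$ survives the competition between the scales $\delta$, $\eps$ and $n(\eps)$; once that is in place, the truncation identity and Lemma \ref{lconv-1} combine routinely to bridge from averaged to genuine strong convergence.
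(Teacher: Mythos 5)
Your proof is correct and follows essentially the same route as the paper: both identify the velocity averages $\tfrac12\int_{-l}^{l}h_n\,d\lambda$ with the truncations $T_l(u_n)$, extract a common subsequence by diagonalization over $l$, and use the uniform $L^2(\Omega)$ bound (via \eqref{prva}, \eqref{cond}, \eqref{neps}, \eqref{divfepsL1}) together with Lemma \ref{lconv-1} to make $\sup_n\|u_n-T_l(u_n)\|_{L^1(\Omega)}$ small for large $l$. The only cosmetic difference is that the paper first shows the limits $u^l$ of the truncations form a Cauchy family in $l$ and then passes to $u_n\to u$, whereas you show $(u_n)$ is Cauchy in $L^1(\Omega)$ directly; the ingredients are identical.
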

\begin{proof} 
For $l\in \N$ we set $\rho(\lambda)=\chi_{(-l,l)}(\lambda)$, where $\chi_{(-l,l)}$ is the characteristic function of the interval $(-l,l)$. 
Then by Theorem \ref{va} combined with a diagonalization argument there exists a common subsequence, 
again denoted by $(u_n)$, 
such that for any $l\in \N$:
\begin{align}
\frac{1}{2}\int_{-l}^l h_n(t,\mx,\lambda) d\lambda&=u_n
\chi_{\{|u_n|\leq l\}}(t,\mx)+l \chi_{\{u_n>l\}}(t,\mx)-l \chi_{\{u_n<-l\}}(t,\mx)\nonumber\\
&=:T_l(u_n) \to u^l \label{3.8first}
\end{align}
as $n\to \infty$ in $L^1_{loc}(\R^+\times \R)$. 
The operators $T_l$ are known as truncation operators \cite{DHM}. 

It is not difficult to prove that from here we can conclude about the convergence of $(u_n)$. 
First, we  show that the sequence $(u^l)$  converges strongly in
$L^1_{loc}(\R^+\times \R)$ as $l\to \infty$.

To this end, let $\Omega\Subset \R^+\times \R^d$. We claim that
\begin{equation}
\label{uni_n}
\lim_l \sup_n \|{T_l(u_{n}) -  u_{n}}\|_{L^1(\Omega)} \to 0\,.
\end{equation}
In fact, let
$$
\Omega_n^l=\{ (t,\mx)\in\Omega:\, |u_{n}(t,\mx)| > l \}.
$$
Then since (by \eqref{prva}, \eqref{cond}, \eqref{neps}, and \eqref{divfepsL1})
 $(u_{n})$ is bounded in $L^2(\Omega)$, we have

$$
\int_\Omega |u_{n}-T_l(u_{n})| dt d\mx
\leq \int_{\Omega_n^l} |u_{n}|dt d\mx  \leq {\rm meas}(\Omega_n^l)^{1/2} \, \|u_n\|_{L^2(\Omega)}\to 0
$$ 
as $l\to \infty$, uniformly with respect to $n$ according to \eqref{conv1}.
This gives \eqref{uni_n}.

Next, we estimate
\begin{align}
\label{ul} \|u^{l_1}-u^{l_2}\|_{L^1( \Omega)}
&\leq
\|u^{l_1}-T_{l_1}(u_n)\|_{L^1( \Omega)}+\|T_{l_1}(u_n)-u_n\|_{L^1( \Omega)}\nonumber\\
&+\|T_{l_2}(u_n)-u_n\|_{L^1( \Omega)}+\|T_{l_2}(u_n)-u^{l_2}\|_{L^1( \Omega)}\,,\nonumber
\end{align}
which together with \eqref{3.8first} and \eqref{uni_n} implies that $(u^l)$ is a Cauchy sequence. Thus, there exists $u\in L^1(\Omega)$ such that
\begin{equation}
\label{conv2}
u^l \to u \ \ {\rm in} \ \ L^1(\Omega).
\end{equation}

Now it is not difficult to see that the entire sequence $(u_n)$ converges towards $u$
in $L^1(\Omega)$ as well. Namely, 
\begin{align*}
  \|u_n-u\|_{L^1(\Omega)} \leq
\|u_n-T_l(u_n)\|_{L^1(\Omega)}+\|T_l(u_n)-u^l\|_{L^1(\Omega)}+\|u^l-u\|_{L^1(\Omega)},
\end{align*}
which by the definition of the functions $u^l$, in conjunction with 
\eqref{uni_n} and \eqref{conv2} gives the claim. \end{proof}

We actually proved only item (i) of Theorem \ref{main-theorem}. The other two items (item (ii) and item (iii)) can be proven by an adaptation of the proof for item (i). We provide a more precise explanation in the following remark.

\begin{remark}
\label{39}
Derivation of (ii) and (iii) from Theorem \ref{main-theorem}.

If we additionally assume that $\|\pa_\lambda \mff\| \leq C <\infty$ then we can use \eqref{peta} in the case $\delta={\cal O}(\eps^2)$ and $\frac{\sqrt{\delta}}{\sqrt{\eps} n(\eps)^{d/2+2}}\to 0$ and the considerations thereafter will remain the same. This gives (ii). Indeed, we have to check whether the sequence of equations \eqref{for-va} satisfies the conditions of Theorem \ref{va}. To this end, we need to estimate $\Gamma^\eps_j$, $j=1,\dots,4$, appearing in \eqref{G}. If we assume $\|\pa_\lambda \mff\| \leq C <\infty$ then we can omit \eqref{palambdafepsinfty} and get instead of \eqref{G2-new}
\begin{align}
\label{G2-newReg}
|\left<\ddiv \Gamma_{2}^{\eps},\phi \right>| 
\leq C \delta^{1/2} \left(\frac{1}{\sqrt{\eps} }
+ \Big(\frac{\sqrt\delta}{\sqrt{\eps}}+\sqrt{\eps}\Big)\frac{1}{n(\eps)} + \frac{1}{\sqrt{\eps}}
+\frac{1}{n(\eps)^{d/2 +1}} \right) \\
\cdot\|\phi\|_{H_{0}^{1}([0,T]\times \R^{d+1})}
\rightarrow 0^{+},\enspace as\enspace \eps\rightarrow 0^{+}.
\nonumber
\end{align} Similarly, instead of \eqref{G4-new}, we have
\begin{equation}
\label{G4-newReg}
\begin{split}
|\left<\Gamma_{4}^{\eps},\phi \right>|& \leq \delta\left|\sum_{j=1}^{d}\int_{0}^{T}
\int_{[-M,M]^{d+1}}|u_{x_j}^{\eps}| \, |u_{t,x_{j}}^{\eps}| \, |\phi(t,\mx,\lambda)|d
\mx dt d\lambda \right|\\
& \le C \sqrt{\delta} \|\phi\|_\infty \Big(\frac{1}
{\sqrt{\eps}} +
\frac{\sqrt{\delta}}{\sqrt{\eps} n(\eps)}\Big) \Big[\frac{1}{\sqrt{\eps}} 
+ \\
& + \Big(\frac{\sqrt{\delta}}{\sqrt{\eps}}
+\sqrt{\eps}\Big)\frac{1}{n(\eps)} + 
\frac{1}{\sqrt{\eps}}+  \frac{1}{n(\eps)^{d/2 +1}}
\Big]\leq C \|\phi\|_\infty 
\end{split}
\end{equation}

If we have a regular flux i.e. $\mff \in C^1(\R^d\times \R)$ 
and the conditions $\delta=o(\eps^2)$ and 
$\frac{\sqrt{\delta}}{\sqrt{\eps} n(\eps)^{d/2+2}}\to 0$ as $\eps\to 0$, we can use Young measures in the way given in \cite{Lfl-b} to derive the convergence. Actually, in this case the diffusion given by the second order term in \eqref{vc-1} will dominate over the dynamic capillarity given by the third order term in \eqref{vc-1} and therefore, we will end up with the unique Kruzhkov admissible solution to \eqref{cl-1} with the corresponding initial data. 

Indeed, it is not difficult to see that under the regularity assumptions on $\mff$ and $\delta=o(\eps^2)$, we have
\begin{equation}
\label{regular}
\begin{split}
& -\int_{\R^+\times \R^d \times \R} \left( \Gamma_{1}^{\eps}+\Gamma_{2}^{\eps}\right) \cdot \nabla \phi(t,\mx) \eta''(\lambda)dt d\mx d\lambda \to 0\\
&  -\int_{\R^+\times \R^d \times \R} \Gamma_{4}^{\eps}(t,\mx,\lambda) \phi(t,\mx) \eta'''(\lambda) dt d\mx d\lambda \to 0.
\end{split}
\end{equation} Now, denote by $\nu_{(t,\mx)}$ the Young measure (e.g. \cite{evans}) corresponding to a subsequence of the family $(u^\eps)$. If we let $\eps\to 0$ in \eqref{kinetic-new} along the subsequence for a convex entropy $\eta$, and take \eqref{regular} into account, we get in the sense of distributions (below, $q(\mx,\lambda)=\int_{-\infty}^\lambda \eta'(v) \pa_v \mff(\mx,v) dv$ is the entropy flux)

\begin{equation*}
\partial_{t} \int \eta(\lambda)d\nu_{(t,\mx)}(\lambda) +\ddiv \int q(\mx,\lambda) d\nu_{(t,\mx)}(\lambda)+\int \int_{-\infty}^{\lambda} \eta''(v)\ddiv \mff(\mx,v) dv d\nu_{(t,\mx)}(\lambda) \leq 0.
\end{equation*} Now, we simply rely on the result from \cite{Dpe} (see also \cite{KLfl2, Sze}) to conclude that the Young measure is unique and atomic i.e. of the form $\nu_{(t,\mx)}(\lambda)=\delta(\lambda-u(t,\mx))$ for the unique entropy admissible solution to the underlying conservation law \eqref{cl-1}.

\end{remark}

{\bf Acknowledgment} The work is supported in part by project P30233 of the Austrian Science Fund (FWF) and by the Lise Meitner project M 2669 of the Austrian Science Fund (FWF). It is also supported in part by the Croatian Science Foundation under Project MiTPDE (number IP-2018-01-2449).

\end{document}